\DeclareMathOperator{\End}{\mathrm{End}}
\DeclareMathOperator{\Aut}{\mathrm{Aut}}
\DeclareMathOperator{\Sp}{\mathrm{Sp}}
\DeclareMathOperator{\SO}{\mathrm{SO}}
\DeclareMathOperator{\GL}{\mathrm{GL}}
\DeclareMathOperator{\SL}{\mathrm{SL}}
\DeclareMathOperator{\tr}{\mathrm{tr}}
\DeclareMathOperator{\OG}{\mathrm{OG}}
\DeclareMathOperator{\IG}{\mathrm{IG}}
\newtheorem{theorem}{Theorem}
\newtheorem{lemma}[theorem]{Lemma}
\newtheorem{proposition}[theorem]{Proposition}
\newtheorem{corollary}[theorem]{Corollary}
\theoremstyle{definition}
\newtheorem{definition}[theorem]{Definition}
\newtheorem{example}[theorem]{Example}
\newtheorem{remark}[theorem]{Remark}
\DeclareMathOperator{\poly}{\mathrm{poly}}
\DeclareMathOperator{\Rep}{\mathrm{Rep}}
\DeclareMathOperator{\Gr}{\mathrm{Gr}}
\DeclareMathOperator{\diag}{\mathrm{diag}}
\DeclareMathOperator{\Spin}{\mathrm{Spin}}
\DeclareMathOperator{\sym}{\mathrm{sym}}
\DeclareMathOperator{\Ad}{\mathrm{Ad}}
 \newcommand{\C}{\mathbb{C}}
 \newcommand{\Z}{\mathbb{Z}}
\newcommand{\cl}{\mathcal{L}}
\newcommand{\frt}{\mathfrak{t}}
\def\Bbb C{{\mathbb C}}
\begin{document}

\title{Representation ring of Levi subgroups versus cohomology ring of flag varieties}

\author{Shrawan Kumar}

\maketitle
\section{Introduction}
Let us begin by recalling the classical result that the cup product structure constants for the singular cohomology with integral coefficients $H^*$ of the Grassmannian of $r$-planes coincide
with the Littlewood-Richardson tensor product structure constants for $\GL_r$. Specifically, the result asserts that there is a $\mathbb{Z}$-algebra 
homomorphism 
$\phi: \Rep _{\poly}(\GL_r) \to H^*(\Gr(r, n)),$
where $\Gr(r, n)$ denotes the Grassmannian of $r$-planes in $\mathbb{C}^n$,  $\Rep_{\poly} (\GL_r)$ denotes the polynomial representation ring of 
$\GL_r$ and $\phi$ takes the irreducible polynomial representation $V(\lambda)$ of  $\GL_r$ corresponding to the partition 
$\lambda:\lambda_1 \geq \dots\geq \lambda_r\geq 0$ to the Schubert class $\epsilon_{v_A(\lambda)}$ corresponding to the same partition 
$\lambda$ if $\lambda_1 \leq n-r$, where $v_A(\lambda)$ is defined in Section 5. If  $\lambda_1 > n-r$, then $\phi (V(\lambda))=0$. 

{\it This work seeks to achieve one  possible generalization  of this classical result for  $\GL_r$ and the Grassmannian 
$\Gr(r,n)$ to the Levi subgroups of any reductive group $G$ and the corresponding  flag varieties.} We also refer to the work  of Tamvakis [T] for another generalization.

Let $G$ be a connected reductive group  over $\mathbb{C}$ with a Borel subgroup $B$ and maximal torus $T\subset B$. Let $P$ be a standard parabolic subgroup with the Levi subgroup $L$ containing $T$. Let $W$ (resp. $W_L$) be the Weyl group of $G$ (resp. $L$). Let $V(\lambda)$ be an irreducible almost faithful representation of $G$ with highest weight $\lambda$ (i.e., the corresponding map $\rho_\lambda: G \to \Aut (V(\lambda))$ has finite kernel). Then, 
Springer defined an adjoint-equivariant regular map with Zariski dense image
$
\theta_{\lambda}:G\to \mathfrak{g}$ (depending upon $\lambda$)
 (cf. Section 3). 
By Lemma \ref{lemma1}, $\theta_\lambda$ takes the maximal torus $T$ to its Lie algebra $\frt$. This induces  a $\C$-algebra homomorphism
$({\theta_\lambda}_{|T})^*: \C[\frt] \to \C[T]$  on the corresponding affine coordinate rings. Since $
\theta_{\lambda}$ is equivariant under the adjoint actions, $({\theta_\lambda}_{|T})^*$ takes $\C[\frt]^{W_L}=S(\frt^*)^{W_L}$ to $\C[T]^{W_L}$. Moreover,  $({\theta_\lambda}_{|T})^*$ 
is injective. Let $\Rep^\C(L)$ be the complexified representation ring of the Levi subgroup $L$. As it  is well known,
$$  \Rep^\C(L) \simeq \C[T]^{W_L}$$
induced from the restriction of the character to $T$. We call the image of  $\C[\frt]^{W_L}$ under  $({\theta_\lambda}_{|T})^*$, the {\em $\lambda$-polynomial subring}  $\Rep^\C_{\lambda-\poly}(L)$ of $\Rep^\C(L)$ (cf. Definition \ref{maindefi}). 

For $G=\GL_n$ and $V(\lambda)$ the defining representation $\C^n$, the ring  $\Rep_{\lambda-\poly}(G):=  \Rep^\C_{\lambda-\poly}(G)\cap  \Rep (G)$
coincides with the standard notion of polynomial representation ring of $\GL_n$ (cf. Section 5). 

 The Borel homomorphism 
$\beta: S(\frt^*) \to H^*(G/B, \C)$ (which is surjective) from the symmetric algebra of $\frt^*$ restricticted  to the $W_L$-invariants gives a surjective 
$\C$-algebra homomorphism $\beta^P:  S(\frt^*)^{W_L} \to H^*(G/P, \C)$. Thus, we get a surjective $\C$-algebra homomorphism 
$\xi^P_\lambda:  \Rep^\C_{\lambda-\poly}(L) \to H^*(G/P, \C)$ (cf. Theorem \ref{thmmain}), which is our main  result. 

Specializing the above result to the case when $G=\GL_n$, $\lambda$ is the first fundamental weight (so that $V(\lambda)$ is the standard defining representation $\C^n$)  and $P=P_r$ (for any $1\leq r \leq n-1$) is the maximal parabolic subgroup so that the flag variety 
$G/P_r$ is the Grassmannian $\Gr(r,n)$, we recover the above classical result as shown in Section 5 (cf. Theorem \ref{thm3}). 

We determine  the $\lambda$-polynomial representation ring $\Rep^\C_{\lambda-\poly}(G)$, for $\lambda$ the first fundamental weight $\omega_1$
(i.e., $V(\lambda)$ is the defining representation) of the classical groups:
$\SO_n, \Sp_{2n}$ in Section 6 (cf. Proposition \ref{prop5}). In this case, the Springer morphism coincides with the classical Cayley transform. Recall that the defining representations of the classical groups have minimum Dynkin index (cf. [KN, $\S$4]). We believe that for the exceptional groups as well, the irreducible representation $V(\lambda)$ with minimum Dynkin index might be most `appropriate' to consider the Springer morphism. Recall (loc. cit.) that 
for the exceptional groups: $G_2, F_4, E_6, E_7, E_8$, the representation $V(\lambda)$ has minimum Dynkin index for 
$\lambda= \omega_1, \omega_4, \omega_1$ (and $\omega_6$), $ \omega_7, \omega_8$ respectively.  

We  partially determine  the homomorphism $\xi^P_\lambda:  \Rep^\C_{\lambda-\poly}(L) \to H^*(G/P, \C)$ (with respect to the defining representation: $\lambda =\omega_1$)  for all the maximal parabolic subgroups $P$ in the classical groups $\Sp_{2n}, \SO_{2n+1}$ and $\SO_{2n}$ in Sections 7, 8 and 9  (cf. Propositions \ref{prop8}, \ref{prop9} and \ref{prop10} respectively). 

We determine the homomorphism $\xi^B_{\omega_1}:  \Rep^\C_{\omega_1-\poly}(T) \to H^*(G/B, \C)$  for the Borel subgroups $B$ in the classical groups
$\Sp_{2n}, \SO_{2n+1}$ and $\SO_{2n}$  in Section 10 (cf. Proposition \ref{lastpropo}). (In this case, $T$ is of course the Levi subgroup of $B$.)

\vskip2ex

\noindent
{\bf Acknowledgements:} I am grateful to Michele Vergne whose question led me to this work. This work was partially done during my visit to the University of Sydney, hospitality of which is gratefully acknowledged. This work was partially supported by the NSF grant DMS- 1501094.

\section{Notation}

Let $G$ be a connected reductive group  over $\mathbb{C}$ with a Borel subgroup $B$ and maximal torus $T\subset B$. Let $P$ be a standard parabolic subgroup with the Levi subgroup $L$ containing $T$. We denote their Lie algebras by the corresponding Gothic characters: $\mathfrak{g}, \mathfrak{b}, \mathfrak{t},\mathfrak{p}$
respectively. We denote by $\Delta=\{\alpha_1, \dots, \alpha_\ell\}\subset \mathfrak{t}^*$ the set of simple roots. The fundamental weights of $\mathfrak{g}$ are denoted by  $\{\omega_1, \dots, \omega_\ell\}\subset \mathfrak{t}^*$. Let $W$ (resp. $W_L$) be the Weyl group of $G$ (resp. $L$).
Then, $W$ is generated by the simple reflections $\{s_i\}_{1\leq i \leq \ell}$. Let $W^P$ denote the set of smallest coset representatives in the cosets in $W/W_L$. {\it Throughout the paper we follow the indexing convention as in [Bo, Planche
I - IX].} 

Let $X(T)$ be the group of characters of $T$ and let $D\subset X(T)$ be the set of dominant characters  (with respect to the given choice of $B$ and hence positive roots, which are the roots of $\mathfrak{b}$). Then, the isomorphism classes of  finite dimensional irreducible representations of $G$ are bijectively parameterized by $D$ under the correspondence $\lambda \in D \leadsto V(\lambda)$, where
$V(\lambda)$ is the irreducible representation of $G$ with highest weight $\lambda$. We call $V(\lambda)$ {\it almost faithful} if the corresponding map $\rho_\lambda: G \to \Aut (V(\lambda))$ has finite kernel. 

Recall the 
Bruhat decomposition for the flag variety:
$$G/P=\sqcup_{w\in W^P}\, \Lambda_w^P,\,\,\,\text{where}\,\, \Lambda^P_w:= BwP/P.$$
Let $\bar{\Lambda}_w^P$ denote the closure of $\Lambda_w^P$ in $G/P$.  We denote by 
$[\bar{\Lambda}_w^P] \in H_{2 \ell(w)}(G/P, \mathbb{Z})$ its
fundamental class. Let $\{\epsilon^P_w\}_{w\in W^P}$ denote the Kronecker dual basis of the cohomology, i.e., 
$$\epsilon^P_w([\bar{\Lambda}_v^P])= \delta_{w,v}, \,\,\,\text{for any}\,\, v,w\in W^P.$$
Thus, $\epsilon^P_w$ belongs to the singular cohomology:
$$\epsilon^P_w\in H^{2 \ell(w)}(G/P, \mathbb{Z}).$$

\section{Springer Morphism}
\begin{definition}\label{def1}
Let $V(\lambda)$ be any almost faithful irreducible representation of $G$. Following Springer (cf. [BR, $\S$9]), define the map
$$
\theta_{\lambda}:G\to \mathfrak{g}\quad \text{(depending upon $\lambda$)}
$$
as follows:
\[
\xymatrix{
G\ar[r]^-{\rho_{\lambda}}\ar[dr]^{\theta_{\lambda}} & \Aut (V(\lambda))\subset \End (V(\lambda))=\mathfrak{g}\oplus \mathfrak{g}^{\perp}\ar[d]^{\pi}\\
 & \mathfrak{g}
}
\]
where  $\mathfrak{g}$ sits canonically inside $\End(V(\lambda))$ via the derivative $d\rho_{\lambda}$, the orthogonal complement $\mathfrak{g}^{\perp}$
is  taken with respect to the standard conjugate $ \Aut(V(\lambda))$-invariant form on $\End (V(\lambda))$: $\langle A, B\rangle :=\tr (AB)$,  and $\pi$ is the projection to the $\mathfrak{g}$-factor. (By considering a compact form $K$ of $G$, it is easy to see that  $ \mathfrak{g} \cap \mathfrak{g}^{\perp} =\{0\}$.)

Since $\pi\circ d\rho_{\lambda}$ is the identity map, $\theta_{\lambda}$ is a local diffeomorphism at $1$ (and hence with Zariski dense image). Of course, by construction, $\theta_{\lambda}$ is an algebraic morphism. Moreover, since the decomposition $ \End (V(\lambda))=\mathfrak{g}\oplus \mathfrak{g}^{\perp}$ is $G$-stable, it is easy to see that $\theta_\lambda$ is $G$-equivariant under conjugation.
\end{definition}
\begin{lemma} \label{lemma1} The above morphism restricts to ${\theta_\lambda}_{|T}: T \to \mathfrak{t}$.
\end{lemma}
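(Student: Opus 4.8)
The plan is to exploit the conjugation-equivariance of $\theta_\lambda$ recorded at the end of Definition \ref{def1}, together with the facts that $T$ is abelian and maximal. First I would observe that since $\theta_\lambda=\pi\circ\rho_\lambda$ and both $\mathfrak{g}$ and $\mathfrak{g}^\perp$ are stable under conjugation by $\rho_\lambda(G)\subset\Aut(V(\lambda))$ (the trace form being $\Aut(V(\lambda))$-invariant and $\mathfrak{g}$ being a conjugation-stable subspace), the projection $\pi$ intertwines the conjugation action of $G$ on $\End(V(\lambda))$ with the adjoint action on $\mathfrak{g}$. Hence $\theta_\lambda(gxg^{-1})=\rho_\lambda(g)\,\pi(\rho_\lambda(x))\,\rho_\lambda(g)^{-1}=\Ad(g)\,\theta_\lambda(x)$ for all $g,x\in G$. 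Specializing to $g=s\in T$ and $x=t\in T$, commutativity of $T$ gives $sts^{-1}=t$, so $\Ad(s)\bigl(\theta_\lambda(t)\bigr)=\theta_\lambda(t)$ for every $s\in T$.

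Thus $\theta_\lambda(t)$ lies in the subspace $\mathfrak{g}^T$ of $\Ad(T)$-fixed vectors of $\mathfrak{g}$. Next I would invoke the root space decomposition $\mathfrak{g}=\mathfrak{t}\oplus\bigoplus_{\alpha}\mathfrak{g}_\alpha$, the sum being over the roots $\alpha$ of $G$, on which $s\in T$ acts trivially on $\mathfrak{t}$ and by the scalar $\alpha(s)$ on $\mathfrak{g}_\alpha$. Since each root $\alpha$ is a nontrivial character of $T$, for each $\alpha$ there is an $s\in T$ with $\alpha(s)\neq 1$, so no nonzero vector of $\mathfrak{g}_\alpha$ is $\Ad(T)$-fixed; therefore $\mathfrak{g}^T=\mathfrak{t}$. (Equivalently, $\mathfrak{g}^T$ is the Lie algebra of the centralizer $Z_G(T)$, which is $T$ itself for connected reductive $G$.) Combining the two steps, $\theta_\lambda(t)\in\mathfrak{t}$ for all $t\in T$, which is the claim.

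I do not expect any real obstacle here; the only point needing a moment's care is the equivariance of $\pi$, i.e.\ that the decomposition $\End(V(\lambda))=\mathfrak{g}\oplus\mathfrak{g}^\perp$ is preserved under conjugation, and this has already been noted in Definition \ref{def1}. One could instead argue directly inside $\End(V(\lambda))$ by fixing a weight basis of $V(\lambda)$: then $\rho_\lambda(t)$ is diagonal, $d\rho_\lambda(\mathfrak{t})$ consists of diagonal matrices, and one would check that the orthogonal projection of a diagonal matrix onto $d\rho_\lambda(\mathfrak{g})$ again lies in $d\rho_\lambda(\mathfrak{t})$; but this forces one to describe $\mathfrak{g}^\perp$ explicitly, whereas the equivariance argument sidesteps that, so it is the route I would take.
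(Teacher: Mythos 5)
Your proof is correct and follows essentially the same route as the paper: both use the conjugation-equivariance of $\theta_\lambda$ together with commutativity of $T$ to conclude that $\theta_\lambda(t)$ is $\Ad(T)$-fixed, and then the root space decomposition (with each root a nontrivial character of $T$) to identify the fixed subspace with $\mathfrak{t}$. The only difference is cosmetic — you phrase the last step as $\mathfrak{g}^T=\mathfrak{t}$, while the paper writes out the components $x_\alpha$ and notes they must vanish.
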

\begin{proof} Take any $t\in T$ and write 
$$\theta_\lambda (t)= h+\sum_{\alpha\in R}\,x_\alpha,\,\,\,\text{for}\,\, h\in  \mathfrak{t}, \,\,\text{and}\,\, x_\alpha \in  \mathfrak{g}_\alpha,$$
where $R$ is the set of all the roots and $ \mathfrak{g}_\alpha$ is the root space of  $\mathfrak{g}$ corresponding to the root $\alpha$.  Now, since $\theta_\lambda$ is conjugation invariant, we get that 
$$\theta_\lambda (t)= h+\sum_{\alpha\in R}\,(\Ad s)\cdot x_\alpha,\,\,\,\text{for any }\,\, s\in  T.$$
From this we see that each $ x_\alpha =0$, i.e., $\theta_\lambda(t) \in \mathfrak{t}$, proving the lemma.
\end{proof}
\begin{example} \label{newexample1} The Springer morphism $\theta_\lambda: G \to \mathfrak{g}$, in general, indeed depends upon the choice of $\lambda$. For example, the Springer morphism $\theta_{\omega_1}: \SL_2 \to \mathfrak{s}l_2$ restricted to the diagonal torus can easily seen to be
$$
\theta_{\omega_1}
\left(
\begin{matrix}
z & 0\\
0 & z^{-1}
\end{matrix}
\right) = \left(
\begin{matrix}
\frac{z-z^{-1}}{2} & 0\\
0 & - \frac{z-z^{-1}}{2}
\end{matrix}
\right).
$$
On the other hand,  the Springer morphism $\theta_{2\omega_1}: \SL_2 \to \mathfrak{s}l_2$ restricted to the diagonal torus
is given by
$$
\theta_{2\omega_1}
\left(
\begin{matrix}
z & 0\\
0 & z^{-1}
\end{matrix}
\right) = \left(
\begin{matrix}
\frac{z^2-z^{-2}}{2} & 0\\
0 & - \frac{z^2-z^{-2}}{2}
\end{matrix}
\right).
$$
\end{example}

\section{Main Result}

We follow the notation and assumptions as in Section 2. In particular, $G$ is a connected reductive group and $P$ a standard parabolic subgroup with Levi subgroup  $L$ containing the chosen maximal torus $T$. Take an almost faithful irreducible $G$-module 
$V(\lambda)$. As in Section 3, this gives rise to the Springer morphism $\theta_\lambda: G \to \mathfrak{g}$. 
which restricts to
$
{\theta_\lambda}_{|T}:T\to \mathfrak{t}
$ (cf. Lemma \ref{lemma1}).  

 For any $\mu \in X(T)$, we have a  $G$-equivariant
 line bundle $\cl (\mu)$  on $G/B$ associated to the principal $B$-bundle $G\to G/B$
via the one dimensional $B$-module $\mu^{-1}$. (Any  $\mu \in X(T)$ extends
 uniquely to a character of $B$.) The one dimensional $B$-module $\mu$ is also denoted by
 $\mathbb{C}_\mu$. Recall the surjective  Borel homomorphism
 $$\beta : S(\mathfrak{t}^*) \to H^*(G/B, \mathbb{C}),$$
which takes a character $ \mu \in X(T)$ to the first Chern class of the  line bundle $\cl(\mu)$. (We realize 
$X(T)$ as a lattice in $\mathfrak{t}^*$ via taking derivative.) We then extend this map linearly over $\mathbb{C}$ to $\mathfrak{t}^*$ and extend further as a graded algebra homomorphism from $ S(\mathfrak{t}^*)$ (doubling the degree). Under the Borel homomorphism, as, e.g., in [Ku, Exercise 11.3.E.1],
\begin{equation}\label{eqnborel} \beta(\omega_i)=\epsilon_{s_i}^B,\,\,\,\text{for any fundamental weight}\,\, \omega_i.
\end{equation}

Fix a compact form $K$ of $G$. In particular,  $T_o:=K\cap T$ is a (compact) maximal torus of $K$. Then, $W\simeq N(T_o)/T_o$, where $N(T_o)$ is the normalizer of $T_o$ in $K$. Recall that $\beta$ is $W$-equivariant under  the standard action of $W$ on  $S(\mathfrak{t}^*)$ and the $W$-action on $H^*(G/B, \mathbb{C})$ induced from the $W$-action on $G/B\simeq K/T_o$ via 
$$(nT_o)\cdot (kT_o):= kn^{-1}T_o,\,\,\,\text{for}\,\, n\in N(T_o)\,\,\,\text{and}\,\,  k\in K.$$
Thus, for any standard parabolic subgroup $P$ with the Levi subgroup $L$ containing $T$, restricting $\beta$, we get a surjective graded algebra homorphism:
$$\beta^P : S(\mathfrak{t}^*)^{W_L} \to H^*(G/B, \mathbb{C})^{W_L}\simeq  H^*(G/P, \mathbb{C}),$$
where the last isomorphism, which is induced from the projection $G/B \to G/P$,  can be found, e.g.,  in [Ku, Corollary 11.3.14]. 

Now, the Springer morphism ${\theta_\lambda}_{|T}:T\to \mathfrak{t}$ (restricted to $T$) gives rise to the corresponding $W$-equivariant injective  algebra homomorphism 
on the affine coordinate rings:
$$ ({\theta_\lambda}_{|T})^*: \mathbb{C}[\mathfrak{t}]=S(\mathfrak{t}^*)\to  \mathbb{C}[T].$$
Thus,  on restriction to $W_L$-invariants, we get an injective algebra homomorphism 
$$ {\theta_\lambda (P)}^*: \mathbb{C}[\mathfrak{t}]^{W_L}=S(\mathfrak{t}^*)^{W_L}\to  \mathbb{C}[T]^{W_L}.$$
Now, let $\Rep (L)$ be the representation ring of $L$ and let  $\Rep^\mathbb{C} (L):= \Rep (L)\otimes_{\mathbb{Z}}\,\mathbb{C}$ be its complexification.  Then, as it is well known,   
\begin{equation} \label{eqn1}\Rep^\mathbb{C} (L)\simeq  \mathbb{C}[T]^{W_L}
\end{equation}
obtained from taking the character of an $L$-module restricted to $T$. 

A representation $V$ of $L$, thought of as an element of $\Rep (L)$, is denoted by $[V]$.  {\it We will often identify a virtual representation of $L$ with its character restricted to $T$ (which is automatically $W_L$-invariant).}
\begin{definition} \label{maindefi} We will call a virtual character $\chi\in  \Rep^\mathbb{C} (L)$ of $L$ a {\it $\lambda$-polynomial character} if the corresponding function in 
$ \mathbb{C}[T]^{W_L}$ is in the image of $ {\theta_\lambda (P)}^*$. The set of all $\lambda$-polynomial characters of $L$, which is, by definition,  a subalgebra of $\Rep^\mathbb{C} (L)$ isomorphic to the algebra $S(\mathfrak{t}^*)^{W_L}$, is denoted by $\Rep^\mathbb{C}_{\lambda-\poly} (L)$. Of course, the map $ {\theta_\lambda (P)}^*$ induces an algebra isomorphism (still denoted by)
$$ {\theta_\lambda (P)}^*: S\left(\mathfrak{t}^*\right)^{W_L}\simeq \Rep^\mathbb{C}_{\lambda-\poly} (L),$$
under the identification \eqref{eqn1}.
\end{definition}

Putting together the above identifications, we get the following  main result of this note.

\begin{theorem} \label{thmmain}  Let $V(\lambda)$ be an almost faithful irreducible $G$-module and let $P$ be any standard parabolic subgroup. Then,  
the above maps (specifically $\beta^P\circ ({\theta_\lambda (P)}^*)^{-1} $) give
 rise to  a surjective $\mathbb{C}$-algebra homomorphism 
$$\xi_\lambda^P: \Rep^\mathbb{C}_{\lambda-\poly}(L)  \to H^*(G/P, \C).$$

Moreover, let $Q$ be another standard  parabolic subgroup with Levi subgroup $R$ containing $T$ such that $P\subset Q$ (and hence 
$L\subset R$). Then, we have the following commutative diagram:\[
\xymatrix{
\Rep^\mathbb{C}_{\lambda-\poly}(R)  \ar[d]^{\gamma}\ar[r]^{\xi_\lambda^Q} & H^*(G/Q, \C)
\ar[d]^{\pi^*}\\
\Rep^\mathbb{C}_{\lambda-\poly}(L)\ar[r]^{\xi_\lambda^P} &H^*(G/P, \C),
}
\]
where $\pi^*$ is induced from the standard projection $\pi:G/P \to G/Q$ and $\gamma$ is induced from the restriction of representations. 
\end{theorem}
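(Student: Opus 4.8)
The plan is to assemble the theorem from the pieces already developed in the preceding sections, so the first task is to check that the composite
$\xi_\lambda^P := \beta^P\circ (\theta_\lambda(P)^*)^{-1}$
is well defined and is a $\C$-algebra homomorphism. Here $\theta_\lambda(P)^*\colon S(\frt^*)^{W_L}\to\C[T]^{W_L}$ is an injective algebra map with image $\Rep^\C_{\lambda-\poly}(L)$ by Definition~\ref{maindefi}, so its inverse is a well-defined algebra isomorphism $\Rep^\C_{\lambda-\poly}(L)\xrightarrow{\sim}S(\frt^*)^{W_L}$; composing with the surjective algebra homomorphism $\beta^P\colon S(\frt^*)^{W_L}\to H^*(G/P,\C)$ gives a surjective $\C$-algebra homomorphism, since a composite of an isomorphism with a surjection is a surjection. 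That settles the first assertion with essentially no new work.

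For the commutative square, I would first set up the analogous maps for $Q$: since $P\subset Q$ we have $W_L\subset W_R$, hence $S(\frt^*)^{W_R}\subset S(\frt^*)^{W_L}$ and $\C[T]^{W_R}\subset\C[T]^{W_L}$, and the \emph{same} Springer morphism $\theta_\lambda|_T\colon T\to\frt$ (it does not depend on the parabolic) restricts compatibly, so $\theta_\lambda(Q)^*$ is just the restriction of $\theta_\lambda(P)^*$ to $W_R$-invariants. Thus $\theta_\lambda(P)^*$ carries $S(\frt^*)^{W_R}$ onto $\Rep^\C_{\lambda-\poly}(R)$ inside $\Rep^\C_{\lambda-\poly}(L)$, and the map $\gamma$ (restriction of representations from $R$ to $L$, i.e. restriction of $W_R$-invariant characters to being merely $W_L$-invariant) corresponds under $(\theta_\lambda(P)^*)^{-1}$ and $(\theta_\lambda(Q)^*)^{-1}$ exactly to the inclusion $S(\frt^*)^{W_R}\hookrightarrow S(\frt^*)^{W_L}$. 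Hence the left-hand square in the diagram
\[
\xymatrix{
\Rep^\C_{\lambda-\poly}(R)\ar[d]^{\gamma}\ar[r]^-{(\theta_\lambda(Q)^*)^{-1}} & S(\frt^*)^{W_R}\ar[d]^{\mathrm{incl}}\ar[r]^-{\beta^Q} & H^*(G/Q,\C)\ar[d]^{\pi^*}\\
\Rep^\C_{\lambda-\poly}(L)\ar[r]^-{(\theta_\lambda(P)^*)^{-1}} & S(\frt^*)^{W_L}\ar[r]^-{\beta^P} & H^*(G/P,\C)
}
\]
commutes by the previous sentence.

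It remains to check commutativity of the right-hand square, i.e. $\pi^*\circ\beta^Q=\beta^P\circ(\text{inclusion})$ as maps $S(\frt^*)^{W_R}\to H^*(G/P,\C)$. This is the one genuinely geometric point, and I expect it to be the main (though still mild) obstacle. The argument I would give: both $\beta^Q$ and $\beta^P$ are obtained by restricting the single Borel homomorphism $\beta\colon S(\frt^*)\to H^*(G/B,\C)$ to the appropriate ring of invariants, using the identifications $H^*(G/P,\C)\cong H^*(G/B,\C)^{W_L}$ and $H^*(G/Q,\C)\cong H^*(G/B,\C)^{W_R}$ induced by the fibrations $G/B\to G/P$ and $G/B\to G/Q$ (as in [Ku, Corollary 11.3.14]). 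Under these identifications the pullback $\pi^*\colon H^*(G/Q,\C)\to H^*(G/P,\C)$ is simply the inclusion $H^*(G/B,\C)^{W_R}\hookrightarrow H^*(G/B,\C)^{W_L}$, because the triangle $G/B\to G/P\to G/Q$ of projections is compatible with these pullbacks (all three pullbacks to $H^*(G/B,\C)$ are injective and identify the cohomology of the quotient with the relevant invariants). Therefore $\pi^*\circ\beta^Q$ and $\beta^P$ agree on $S(\frt^*)^{W_R}$ since both equal the restriction of $\beta$ there. Concatenating the two commuting squares yields the asserted diagram, completing the proof. The only thing to be careful about is keeping the direction of the $W$-action on cohomology (the convention $(nT_o)\cdot(kT_o)=kn^{-1}T_o$ fixed in Section~4) consistent throughout, so that "invariants" means the same thing for $\beta$, for $\pi^*$, and for the Levi restriction $\gamma$; once that bookkeeping is fixed, every square above is a formal consequence of restricting one ambient map to nested subrings of invariants.
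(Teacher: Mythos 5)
Your proposal is correct and is essentially the paper's own argument: the paper offers no separate proof of Theorem~\ref{thmmain}, simply asserting that "putting together the above identifications" (the isomorphism $\theta_\lambda(P)^*\colon S(\frt^*)^{W_L}\simeq\Rep^\C_{\lambda-\poly}(L)$ and the surjection $\beta^P$) yields $\xi_\lambda^P$, and the functoriality you verify square by square is exactly the implicit content of the second assertion. Your explicit check that $\pi^*$ becomes the inclusion of invariants under the identifications of [Ku, Corollary 11.3.14] is a correct filling-in of the detail the paper leaves to the reader.
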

\begin{example} The subalgebra $\Rep^\mathbb{C}_{\lambda-\poly}(G) \subset  \Rep^\mathbb{C}(G)$, in general, indeed depends 
upon the choice of $\lambda$. For example, for $G=\SL_2$, following  Example \ref{newexample1}, 
$$ \Rep^\mathbb{C}_{\omega_1-\poly}(\SL_2) = \C[(z-z^{-1})^2],$$
whereas
$$ \Rep^\mathbb{C}_{2\omega_1-\poly}(\SL_2) = \C[(z^2-z^{-2})^2],$$
for the maximal torus in $\SL_2$  given by
$$
T=
\left\{\left(
\begin{matrix}
z & 0\\
0 & z^{-1}
\end{matrix}
\right) :z\in \C^*\right\}.
$$
\end{example}
\section{Specialization of Theorem \ref{thmmain} to the case of $G=\GL_n$}
Theorem \ref{thmmain} specialized to $G=\GL_n$ and $P$  the standard maximal parabolic subgroups gives the following classical result. As mentioned in the Introduction, our  motivation behind this work was to seek a generalization of this classical result for an arbitrary reductive group $G$ and {\it any} parabolic subgroup $P$.

For $G=\GL_n$, we take 
  $B$ to be the (standard)
Borel subgroup consisting of invertible upper triangular matrices 
and $T$ to be the subgroup
 consisting of invertible diagonal matrices. Then,
 \[\frt=\{\bar{\bf t}=\text{diag}(t_1, \dots , t_{n}): t_i\in \mathbb{C}\}. \]
The  simple roots and simple coroots are given respectively by
  \[\alpha_i(\bar{\bf t})=t_i-t_{i+1}\,\,\,\text{and}\,\,  \alpha_i^\vee=\text{diag}(0, \dots, 0,1,-1,0, \dots,
  0),\,\,\,\text{for any}\,\, 1\leq i\leq n-1,\]
 where $1$ is placed in the $i$-th place.
We have the fundamental weights:
\[\omega_i(\bar{\bf t})=t_1+\dots+t_i, \,\,\,\text{for any}\,\, 1\leq i\leq n.\]

  The Weyl group $W$ can be identified with the symmetric group $S_{n}$, which acts
  via the permutation of the coordinates of $\bar{\bf t}$. Let $\{s_1, \dots, s_{n-1}\} \subset S_{n}$
  be the (simple)
  reflections corresponding to the simple roots $\{\alpha_1, \dots, \alpha_{n-1}\}$
  respectively. Then,
  \[s_i=(i,i+1).\]
  For any $1\leq r\leq n-1$, let $P_r\supset B$ be the (standard) maximal parabolic
  subgroup of $\GL_n$ such that its unique Levi subgroup $L_r$ containing $T$
  has for its simple roots $\{\alpha_1, \dots, \hat{\alpha}_r, \dots, \alpha_{n-1}\}$.
  Then, $\GL_n/P_r$ can be identified with the Grassmannian $\Gr(r, n)$ of $r$-dimensional subspaces of $\Bbb C^{n}$. 
Moreover, the set
  of minimal coset representatives $W^{P_r}$ of $W/W_{L_r}$ can be identified
with the set of $r$-tuples
\[S(r,n)=\{A:=1\leq a_1 < \cdots < a_r \leq n \}.\]
Any such $r$-tuple $A$ represents the permutation
\[v_A=(a_1,\dots, a_r,a_{r+1}, \dots, a_{n}),\,\,\, i\mapsto a_i,\]
where $\{a_{r+1}< \cdots < a_{n}\}=[n]\setminus \{a_1, \dots, a_r\}$
and
$ [n]:=\{1, \dots, n\}.$

Recall that an irreducible representation of $\GL_r$ is called a {\it polynomial representation} if its character 
$\GL_r \to \C$ extends as a regular map $\mathfrak{g}l_r \to \C$, where  $\mathfrak{g}l_r$ denotes the space of all $r\times r$-matrices over $\C$. Let $\Rep_{\poly}(\GL_r)$ denote the subring of the representation ring  $\Rep(\GL_r)$ spanned by the (irreducible) polynomial representations of $\GL_r$.
($\Rep_{\poly}(\GL_r)$ is indeed a subring [F, $\S$8.3].)  As in [F, Theorem 2, $\S$8.2],  the (irreducible) polynomial representations of $\GL_r$ are parameterized by the partitions
$\lambda=(\lambda_1 \geq \dots \geq \lambda_r\geq 0)$, where the corresponding irreducible representation $V(\lambda)$  has highest weight 
$\bar{\lambda}$, 
$$\bar{\lambda}({\bf t}) :=t_1^{\lambda_1}\dots  t_r^{\lambda_r}, \,\,\,\text{for any invertible diagonal matrix}\,\, {\bf t}=(t_1, \dots, t_r).$$
Define the $\mathbb{Z}$-linear map 
$$\xi : \Rep_{\poly}(\GL_r) \to H^*(\Gr(r,n), \mathbb{Z}), \,\,[V(\lambda)]\mapsto \epsilon_{v_{A(\lambda)}}^{P_r},\,\,\,\text{if}
\,\, \lambda_1\leq n-r,$$
and $\xi([V(\lambda)]):= 0$, if  $\lambda_1> n-r$, where $A(\lambda):= (1+\lambda_r< \dots < r+\lambda_1)$.
Now, we recall the following classical result (cf. [F, $\S$9.4; Proposition 9 of $\S$10.6;  and the identity (1) of $\S$10.2]).
\begin{theorem} \label{thm2}The above map $\xi$ is a surjective algebra homomorphism.
\end{theorem}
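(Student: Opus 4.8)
The plan is to identify $\xi$, via standard dictionaries, with the canonical quotient of the ring of symmetric polynomials in $r$ variables onto $H^*(\Gr(r,n),\mathbb{Z})$; this turns the statement into the Littlewood--Richardson rule, stated once for tensor products of $\GL_r$-modules and once for the cup product on the Grassmannian.

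I would carry this out in three preparatory steps and then assemble. First, recall the classical identification $\Rep_{\poly}(\GL_r)\cong \mathbb{Z}[x_1,\dots,x_r]^{S_r}$ sending $[V(\lambda)]$ to the Schur polynomial $s_\lambda(x_1,\dots,x_r)$, i.e.\ to the character of $V(\lambda)$ on the diagonal torus of $\GL_r$; the $s_\lambda$, as $\lambda$ runs over partitions with at most $r$ parts, form a $\mathbb{Z}$-basis, and $[V(\lambda)]\cdot[V(\mu)]=\sum_{\nu}c^{\nu}_{\lambda\mu}\,[V(\nu)]$ with $c^{\nu}_{\lambda\mu}$ the Littlewood--Richardson numbers (cf.\ [F, $\S$8]). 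Second, recall the classical description of the Grassmannian cohomology: the Schubert classes $\sigma_\lambda$, with $\lambda$ ranging over partitions contained in the $r\times(n-r)$ rectangle $(n-r)^r$, form a $\mathbb{Z}$-basis of $H^*(\Gr(r,n),\mathbb{Z})$, and the cup product is governed by the \emph{same} numbers, $\sigma_\lambda\cdot\sigma_\mu=\sum_{\nu\subseteq(n-r)^r}c^{\nu}_{\lambda\mu}\,\sigma_\nu$ (this is [F, $\S$9.4], together with the Pieri formula [F, identity (1) of $\S$10.2] and [F, Proposition 9 of $\S$10.6]). Third, match the labelling: under the conventions of [Bo] used here, the Schubert class $\sigma_\lambda$ attached to a partition $\lambda\subseteq(n-r)^r$ equals $\epsilon^{P_r}_{v_{A(\lambda)}}$. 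Indeed, writing $A(\lambda)=(a_1<\cdots<a_r)$ with $a_i=i+\lambda_{r+1-i}$, the Grassmannian permutation $v_{A(\lambda)}$ (its unique descent being at position $r$) has length $\sum_{i=1}^{r}(a_i-i)=\sum_{i=1}^{r}\lambda_{r+1-i}=|\lambda|$, so $\epsilon^{P_r}_{v_{A(\lambda)}}$ lies in the correct degree, and the incidence conditions cutting out $\bar{\Lambda}^{P_r}_{v_{A(\lambda)}}$ are precisely the standard ones defining the Schubert variety indexed by $\lambda$.

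With these in hand the conclusion is immediate. Define the $\mathbb{Z}$-linear map $\tilde{\xi}:\mathbb{Z}[x_1,\dots,x_r]^{S_r}\to H^*(\Gr(r,n),\mathbb{Z})$ on the Schur basis by $s_\nu\mapsto\sigma_\nu$ if $\nu\subseteq(n-r)^r$ and $s_\nu\mapsto 0$ otherwise; by the first and third steps, $\tilde{\xi}$ corresponds exactly to $\xi$, and it is surjective since the $\sigma_\nu$ span. That $\tilde{\xi}$ is multiplicative follows by comparing $\tilde{\xi}(s_\lambda s_\mu)$ with $\tilde{\xi}(s_\lambda)\,\tilde{\xi}(s_\mu)$, using that $c^{\nu}_{\lambda\mu}\ne 0$ forces $\nu\supseteq\lambda$ and $\nu\supseteq\mu$: if $\lambda$ or $\mu$ is not contained in $(n-r)^r$ (equivalently, has first part $>n-r$), then no $\nu$ with $c^{\nu}_{\lambda\mu}\ne 0$ is either, so both sides vanish; if $\lambda,\mu\subseteq(n-r)^r$, then $\tilde{\xi}\bigl(\sum_{\nu}c^{\nu}_{\lambda\mu}s_\nu\bigr)=\sum_{\nu\subseteq(n-r)^r}c^{\nu}_{\lambda\mu}\sigma_\nu=\sigma_\lambda\cdot\sigma_\mu$, the last equality being exactly the Grassmannian Littlewood--Richardson rule from the second step. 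Hence $\xi$ is a surjective algebra homomorphism.

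I expect no genuinely hard step here: all the mathematical substance is classical --- the Littlewood--Richardson rule for $\GL_r$-tensor products on one side and for Schubert calculus on $\Gr(r,n)$ on the other --- so the work is the assembling of cited facts. The only point requiring real attention is the convention bookkeeping of the third preparatory step, i.e.\ checking that the Schubert class traditionally indexed by the partition $\lambda$ is $\epsilon^{P_r}_{v_{A(\lambda)}}$ in the [Bo]-based indexing used throughout this paper, which amounts to the length/codimension computation above. Alternatively, one could bypass the direct argument and derive Theorem \ref{thm2} from Theorem \ref{thmmain} specialized to $G=\GL_n$, $\lambda=\omega_1$ and $P=P_r$, after identifying $\Rep_{\poly}(\GL_r)$ with a subring of $\Rep^{\mathbb{C}}_{\omega_1-\poly}(L_r)$ and verifying that the resulting $\mathbb{C}$-algebra homomorphism is defined over $\mathbb{Z}$ and restricts to $\xi$; this is essentially how the classical result is recovered as a special case of Theorem \ref{thmmain} in Section 5.
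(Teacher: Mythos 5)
Your proposal is correct. The paper gives no proof of Theorem \ref{thm2} at all --- it is recalled as a classical result with a citation to [F, $\S$9.4, $\S$10.2, $\S$10.6] --- and your argument (Schur basis on one side, Schubert basis on the other, the same Littlewood--Richardson coefficients governing both products, and the truncation $s_\nu\mapsto 0$ for $\nu\not\subseteq(n-r)^r$ being compatible with multiplication because $c^{\nu}_{\lambda\mu}\neq 0$ forces $\nu\supseteq\lambda$) is exactly the standard content of those citations, correctly assembled.
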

 Take $G=\GL_n$ and $\lambda=\omega_1$  (so that $V(\lambda)$ is the standard representation  $ \C^n$). Then, clearly, 
\begin{equation}\label{eqnnew2}\theta_{\omega_1}: \GL_n \to \mathfrak{g}l_n\,\,\,\text{ is the canonical inclusion}.
\end{equation} Thus, in this case, it is easy to see that 
\begin{equation} \label{eqn2}
 \Rep_{\omega_1-\poly}(\GL_n) = \Rep_{\poly}(\GL_n),
\end{equation}
where $\Rep_{\omega_1-\poly}(\GL_n) :=\Rep^\C_{\omega_1-\poly}(\GL_n)\cap \Rep (\GL_n)$.
For $1 \leq r\leq n-1$, the Levi subgroup $L_r$ of $P_r$ containing $T$ is the subgroup $\GL_r\times \GL_{n-r}$ of $\GL_n$.
From the identity \eqref{eqn2}, it is easy to see that 
\begin{equation} \label{eqn3}
\Rep_{\omega_1-\poly}^\C(L_r) \simeq \left(\Rep_{\poly}(\GL_r) \otimes  \Rep_{\poly}(\GL_{n-r})\right)\otimes_\mathbb{Z} \C.
\end{equation}
Following Theorem \ref{thmmain}, we have the $\C$-algebra homomorphism:
$$\xi^{P_r}_{\omega_1}: \Rep_{\omega_1-\poly}^\C(L_r)\to H^*(\Gr(r,n), \C).$$
Of course, we have a ring homomorphism
$$i:  \Rep_{\poly}(\GL_r) \to   \Rep_{\poly}(\GL_r)\otimes  \Rep_{\poly}(\GL_{n-r})$$
obtained from tensoring a $\GL_r$-module with the trivial one dimensional $\GL_{n-r}$-module. Then, we have the following result:
\begin{theorem} \label{thm3} The $\C$-algebra homomorphism $\xi^{P_r}_{\omega_1}: \Rep_{\omega_1-\poly}^\C(L_r)\to H^*(\Gr(r,n), \C)$
restricted to $ \Rep_{\poly}(\GL_r)$ via $i$ (under the isomorphism \eqref{eqn3}) coincides with the homomorphism $\xi$ of Theorem 
\ref{thm2}.
\end{theorem}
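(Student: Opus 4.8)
The plan is to unwind the definitions on both sides and check that the two maps agree on a natural set of algebra generators, namely the classes attached to the single-row partitions (the special Schubert classes), since these generate $H^*(\Gr(r,n),\C)$ as an algebra and their preimages generate the relevant subalgebra of $\Rep_{\poly}(\GL_r)$. More precisely, I would first record that, by \eqref{eqnnew2}, the Springer morphism $\theta_{\omega_1}$ for $G=\GL_n$ is literally the inclusion $\GL_n\hookrightarrow\mathfrak{g}l_n$, so ${\theta_{\omega_1}}_{|T}\colon T\to\frt$ is the inclusion of the invertible diagonal matrices into all diagonal matrices, and $({\theta_{\omega_1}(P_r)})^*\colon S(\frt^*)^{W_{L_r}}\to\C[T]^{W_{L_r}}$ is the obvious map sending the coordinate functions $t_1,\dots,t_n$ on $\frt$ to the corresponding characters on $T$. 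Under the identification $\Rep^\C(L_r)\simeq\C[T]^{W_{L_r}}$ and \eqref{eqn3}, this is exactly the statement that the $\omega_1$-polynomial subring, intersected with $\Rep(L_r)$, is $\Rep_{\poly}(\GL_r)\otimes\Rep_{\poly}(\GL_{n-r})$, which is \eqref{eqn2}–\eqref{eqn3}; the restriction along $i$ then picks out $\Rep_{\poly}(\GL_r)\otimes 1$.

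Next I would trace through the composite defining $\xi^{P_r}_{\omega_1}$, namely $\beta^{P_r}\circ\big(({\theta_{\omega_1}(P_r)})^*\big)^{-1}$, applied to an element of $\Rep_{\poly}(\GL_r)$ pushed in via $i$. Concretely: given a polynomial representation $V(\lambda)$ of $\GL_r$, its character restricted to $T$ is the Schur polynomial $s_\lambda(t_1,\dots,t_r)$ (viewed inside $\C[T]^{W_{L_r}}$, with the last $n-r$ variables not appearing); inverting $({\theta_{\omega_1}(P_r)})^*$ replaces these characters by the corresponding linear coordinates on $\frt$, so one lands on $s_\lambda(t_1,\dots,t_r)\in S(\frt^*)^{W_{L_r}}$; then one applies the Borel homomorphism $\beta^{P_r}$. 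So the theorem reduces to the identity
\[
\beta^{P_r}\big(s_\lambda(t_1,\dots,t_r)\big)=\xi([V(\lambda)])\in H^*(\Gr(r,n),\C),
\]
i.e.\ that $\beta^{P_r}$ sends the Schur polynomial $s_\lambda$ in the first $r$ Chern roots to the Schubert class $\epsilon^{P_r}_{v_{A(\lambda)}}$ when $\lambda_1\le n-r$ and to $0$ otherwise. For this I would invoke the standard Borel presentation of $H^*(\Gr(r,n))$: it is the quotient of $S(\frt^*)^{W_{L_r}}=\C[t_1,\dots,t_r]^{S_r}\otimes\C[t_{r+1},\dots,t_n]^{S_{n-r}}$ by the ideal making the total Chern class of the tautological plus quotient bundle trivial, and under this presentation the Schur polynomials $s_\lambda(t_1,\dots,t_r)$ with $\lambda\subset (n-r)^r$ map precisely to the Schubert basis (this is exactly the content of [F, $\S$9.4] and [F, $\S$10.6, Proposition 9] together with the identity [F, $\S$10.2 (1)] cited for Theorem \ref{thm2}), and the remaining $s_\lambda$ map to $0$. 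Combined with \eqref{eqnborel} to pin down $\beta$ on the fundamental weights, and the multiplicativity of all maps involved, this gives the equality of ring homomorphisms.

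The main obstacle, and the step deserving genuine care rather than a one-line citation, is the bookkeeping identifying the combinatorial indexing: one must check that the partition $\lambda$ labelling the polynomial representation $V(\lambda)$ of $\GL_r$ corresponds, under $\beta^{P_r}$, to the Schubert class indexed by $v_{A(\lambda)}$ with $A(\lambda)=(1+\lambda_r<\dots<r+\lambda_1)$ as defined in Section 5 — i.e.\ that the conventions for Schubert cells ($BwP/P$ with $w\in W^{P_r}$), for Chern roots of the tautological subbundle versus quotient bundle, and for the transpose/complement in passing between a partition and its $r$-tuple, are all mutually consistent, including the truncation rule $\lambda_1>n-r\Rightarrow 0$. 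Everything else — that $\theta_{\omega_1}$ is the inclusion, that $({\theta_{\omega_1}(P_r)})^*$ is the tautological variable substitution, that the relevant subring of characters is spanned by Schur polynomials, and that $\beta^{P_r}$ is a surjective ring map — is either already in the excerpt or is routine; it is the matching of indices that must be done carefully, and I would do it by comparing both sides on the special Schubert classes $\epsilon^{P_r}_{s_{r-j}s_{r-j+1}\cdots}$ (equivalently the single-row $\lambda=(k)$), using \eqref{eqnborel} and the Chevalley/Monk-type formula, and then extending multiplicatively.
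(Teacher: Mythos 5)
Your proposal is correct and follows essentially the same route as the paper: both reduce the statement to checking the two algebra homomorphisms on a generating set of $\Rep_{\poly}(\GL_r)$, use that $\theta_{\omega_1}$ is the inclusion $\GL_n\hookrightarrow \mathfrak{g}l_n$ so that $\xi^{P_r}_{\omega_1}$ becomes $\beta^{P_r}$ applied to the corresponding symmetric polynomial in $x_1,\dots,x_r$, and then cite Fulton ([F, \S 10.6]) for the identification of that class with the appropriate Schubert class. The only (immaterial) difference is that the paper checks on the fundamental representations $V(\omega_1),\dots,V(\omega_r)$, i.e.\ the elementary symmetric polynomials $e_i(x_1,\dots,x_r)$, for which [F, Proposition 8, \S 10.6] directly gives $\beta(e_i)=\epsilon^{P_r}_{s_{r-i+1}\cdots s_r}$ --- and note that under $A(\lambda)$ these classes correspond to the single-\emph{column} partitions $(1^i)$, not the single-row ones as your parenthetical suggests, though either generating set works.
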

\begin{proof} Since $\xi^{P_r}_{\omega_1}$ and $\xi$ are both algebra homomorphisms and the ring $ \Rep_{\poly}(\GL_r)$ is generated by the fundamental representations $[V(\omega_1)], \dots , [V(\omega_r)]$, it suffices to prove that 
\begin{equation} \label{eqn4}
\xi^{P_r}_{\omega_1}([V(\omega_i)]) = \xi ([V(\omega_i)]), \,\,\,\text{for all}\,\, 1\leq i\leq r.
\end{equation}
Now, by definition,
\begin{equation} \label{eqn5}
 \xi ([V(\omega_i)]) = \epsilon^{P_r}_{s_{r-i+1}\dots s_r}.
\end{equation}
Moreover, from the definition of $\xi^{P_r}_{\omega_1}$, by \eqref{eqnnew2},
\begin{equation} \label{eqn6}
\xi^{P_r}_{\omega_1}([V(\omega_i)]) = \beta \left(e_i(x_1, \dots , x_r)\right), 
\end{equation}
where $x_k \in \mathfrak{t}^*$ is the linear form which takes 
$\bar{\bf t}=(t_1, \dots, t_r, t_{r+1}, \dots, t_n)\in \mathfrak{t}$ to $t_k$,  $e_i$ is the $i$-th elementary symmetric function and $\beta$ is the Borel homomorphism defined in Section 4. Now, by [F, Proposition 8, $\S$10.6], 
\begin{equation} \label{eqn7}
 \beta \left(e_i(x_1,  \dots , x_r)\right)=  \epsilon^{P_r}_{s_{r-i+1}\dots s_r}.\end{equation}
Combining the equations \eqref{eqn5} - \eqref{eqn7}, we get the equation  \eqref{eqn4}. This proves the theorem.

\end{proof}
\begin{remark} Observe that our Theorem \ref{thmmain} in the case of $G=\GL_n$ and $\lambda=\omega_1$ extends the classical 
Theorem \ref{thm2} for $H^*(\Gr(r,n))$ to the cohomology $H^*(\GL_n/B)$ of the full flag variety.
\end{remark}

\section{Determination of $ \Rep_{\omega_1-\poly}(L)$ for other classical groups}

{\it From now on we only consider the Springer morphism for classical groups with respect to the first fundamental weight $\lambda= \omega_1.$ So, we will abbreviate $\theta_{\omega_1}$ by $\theta$ and $\Rep^\C_{\omega_1-\poly}(G)$ by $\Rep^\C_{\poly}(G)$. In this case, $\theta_{\omega_1}$ is the classical Cayley transform.}

We choose quadratic  forms on $\mathbb{C}^{2n}, \mathbb{C}^{2n+1}$ (resp.  alternating form on $\mathbb{C}^{2n}$)  so that $\SO_{2n}$, $\SO_{2n+1}$ (resp. $\Sp_{2n}$) are given respectively by
\begin{align*}
\SO_{2n} &= \{g\in \SL_{2n}:(g^{t})^{-1}=E_{D}gE^{-1}_{D}\}\\
\SO_{2n+1} &= \{g\in \SL_{2n+1}:(g^{t})^{-1}=E_{B}gE^{-1}_{B}\}\\
\Sp_{2n} &= \{g\in \SL_{2n}:(g^{t})^{-1}=E_{C}gE_{C}^{-1}\},
\end{align*}
where $E_{D}$ is the antidiagonal matrix with all its antidiagonal entries $1$; $E_{B}$ is the antidiagonal matrix with all its antidiagonal entries $1$ except the $(n+1,n+1)$-th entry which is $2$; $E_{C}$ is the block matrix 
$$
E_{C}=
\left(
\begin{matrix}
0 & - J_{n}\\
J_{n} & 0
\end{matrix}
\right),
$$
where $J_{n}$ is the antidiagonal $n\times n$ matrix with all its antidiagonal entries $1$. (The suffix $D,B,C$ refers to the types of the corresponding groups.)

Depending upon the case, denote $E_{D}$, $E_{B}$ or $E_{C}$ by the common symbol $E$. We recall the expression of the Springer morphism for these groups with respect to their standard representations $V(\omega_1)$, where $\omega_1$ is the first fundamental weight.

\begin{lemma}\label{lem2}
The Springer morphism $\theta:G\to \mathfrak{g}$\, for $G=\SO_{2n}$, $\SO_{2n+1}$ or $\Sp_{2n}$ is given by 
$$
g\mapsto \frac{g-E^{-1}g^{t}E}{2},\,\,\,\text{for}\,\, g\in G.
$$
(Observe that this is the Cayley transform.)
\end{lemma}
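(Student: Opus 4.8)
The statement to prove is that the Springer morphism $\theta=\theta_{\omega_1}$ for $G=\SO_{2n},\SO_{2n+1},\Sp_{2n}$, defined as the composite $\pi\circ\rho_{\omega_1}$ with $\rho_{\omega_1}$ the standard embedding $G\subset\GL_N$ (so $N=2n$ or $2n+1$) and $\pi:\End(\C^N)\to\mathfrak g$ the orthogonal projection with respect to $\langle A,B\rangle=\tr(AB)$, agrees with the Cayley-type map $g\mapsto\tfrac12(g-E^{-1}g^tE)$. The plan is to identify $\mathfrak g\subset\mathfrak{gl}_N$ and its orthogonal complement $\mathfrak g^{\perp}$ explicitly in terms of the defining bilinear form $E$, and then to check that $\tfrac12(g-E^{-1}g^tE)$ is the $\mathfrak g$-component of $g$ under the resulting direct sum decomposition $\mathfrak{gl}_N=\mathfrak g\oplus\mathfrak g^{\perp}$.

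First I would record the description of $\mathfrak g$ coming from the equations defining $G$. Differentiating $(g^t)^{-1}=EgE^{-1}$ at $g=1$ gives $\mathfrak g=\{X\in\mathfrak{gl}_N: X^t=-EXE^{-1}\}$, i.e. the $(-1)$-eigenspace of the involution $\sigma(X):=E^{-1}X^tE$ of $\mathfrak{gl}_N$ (note $\sigma$ is an involution precisely because $E^t=\pm E$ in each case, which one checks from the explicit $E_D,E_B,E_C$). Correspondingly the $(+1)$-eigenspace is $\mathfrak g^{+}=\{X:X^t=EXE^{-1}\}$, and one has the eigenspace decomposition $\mathfrak{gl}_N=\mathfrak g\oplus\mathfrak g^{+}$, with projection to $\mathfrak g$ given exactly by $X\mapsto\tfrac12(X-\sigma(X))=\tfrac12(X-E^{-1}X^tE)$. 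So the content of the lemma is the identification $\mathfrak g^{\perp}=\mathfrak g^{+}$, i.e. that the $(+1)$-eigenspace of $\sigma$ coincides with the orthogonal complement of the $(-1)$-eigenspace under the trace form.

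The key step is therefore to show $\langle X,Y\rangle=\tr(XY)=0$ for all $X\in\mathfrak g$ and all $Y\in\mathfrak g^{+}$, together with the nondegeneracy/dimension count that upgrades this to $\mathfrak g^{\perp}=\mathfrak g^{+}$. For the orthogonality: using $\tr(AB)=\tr(A^tB^t)$ and the defining relations, for $X\in\mathfrak g$, $Y\in\mathfrak g^{+}$ we get $\tr(XY)=\tr(X^tY^t)=\tr\big((-EXE^{-1})(EYE^{-1})\big)=-\tr(EXYE^{-1})=-\tr(XY)$, hence $\tr(XY)=0$. Since $\sigma$ is an involution, $\mathfrak{gl}_N=\mathfrak g\oplus\mathfrak g^{+}$, and since the trace form is nondegenerate on $\mathfrak{gl}_N$ and (as already noted in Definition \ref{def1}, via the compact form argument) nondegenerate on $\mathfrak g$, a dimension count forces $\mathfrak g^{\perp}=\mathfrak g^{+}$. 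Consequently $\pi$ is exactly the projection $X\mapsto\tfrac12(X-E^{-1}X^tE)$, and evaluating on $g\in G\subset\GL_N$ gives $\theta(g)=\tfrac12(g-E^{-1}g^tE)$, as claimed.

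The routine but slightly delicate point—what I expect to be the only real obstacle—is verifying $E^t=\pm E$ (so that $\sigma$ really is an involution) and that $\tr(XY)=\tr(X^tY^t)$ is being applied correctly in each of the three cases, since $E_B$ is not symmetric-looking at first glance (the $(n+1,n+1)$ entry is $2$) and $E_C$ is antisymmetric; for $E_C$ one has $E_C^t=-E_C$ but the relation $X^t=-E_CXE_C^{-1}$ still defines $\mathfrak{sp}_{2n}$ and $\sigma(X)=E_C^{-1}X^tE_C$ is still an involution because the sign cancels. After dispatching these case checks the rest is immediate. One should also remark that $\theta$ so described is visibly algebraic and conjugation-equivariant, consistent with Definition \ref{def1}, and that it restricts to $T\to\mathfrak t$ as guaranteed by Lemma \ref{lemma1} (indeed $\tfrac12(g-E^{-1}g^tE)$ is diagonal when $g$ is).
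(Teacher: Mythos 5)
Your proof is correct and follows essentially the same route as the paper: the paper simply writes down the decomposition $A=\tfrac12(A-E^{-1}A^{t}E)+\tfrac12(A+E^{-1}A^{t}E)$ and asserts it realizes $\End(V(\omega_1))=\mathfrak g\oplus\mathfrak g^{\perp}$, which is exactly your eigenspace decomposition for the involution $\sigma(X)=E^{-1}X^{t}E$. You merely supply the verification (orthogonality of the two eigenspaces under the trace form plus the dimension count) that the paper leaves implicit under ``follows immediately.''
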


\begin{proof} 
The lemma follows immediately since under the decomposition
$$
\End (V(\omega_1)) = \mathfrak{g}\oplus \mathfrak{g}^{\perp},$$
any $A\in \End (V(\omega_1))$ decomposes as 
$$A = \frac{(A-E^{-1}A^{t}E)}{2}+\frac{(A+E^{-1}A^{t}E)}{2}.$$
\end{proof}

Take the maximal tori in $\Sp_{2n}$, $\SO_{2n}$ and $\SO_{2n+1}$ respectively as follows:
\begin{equation}\label{eqnnew201}
T_{C} = T_{D}=\left\{{\bf t}=
\diag \bigl(t_{1},  \dots, t_n, t_n^{-1}, \dots, 
 t^{-1}_{1}\bigr):t_{i}\in \mathbb{C}^{*}\right\}
\end{equation}
\begin{equation}\label{eqnnew202}
T_{B} = \left\{{\bf t}=
\diag \bigl(
t_{1}, \dots, t_{n}, 1, 
 t^{-1}_{n}, \dots ,  t^{-1}_{1}\bigr)
:~t_{i}\in \mathbb{C}^{*}
\right\}.
\end{equation}
Their Lie algebras are given respectively by 
\begin{equation}\label{eq201}
\frt_{C} = \frt_{D}=\left\{\bar{\bf t}=
\diag \bigl(x_{1},  \dots, x_n, -x_n, \dots, 
 -x_{1}\bigr):x_{i}\in \mathbb{C}\right\}
\end{equation}
\begin{equation} \label{eq202}
\frt_{B} = \left\{\bar{\bf t}=
\diag \bigl(
x_{1}, \dots, x_{n}, 0, 
 -x_{n}, \dots ,  -x_{1}\bigr)
:~x_{i}\in \mathbb{C}
\right\}.
\end{equation}
From the description of the Springer morphism given above, we immediately get the following:

\begin{corollary}\label{coro3}
Restricted to the maximal torus as above, we get the following description of the Springer map $\theta$:

(a) $G=\SO_{2n}:\theta({\bf t})=
\diag \bigl(
\frac{t_{1}- t^{-1}_{1}}{2}, \dots, 
  \frac{t_{n} -t_{n}^{-1}}{2}, 
  -(\frac{t_{n}-t^{-1}_{n}}{2}), \dots, 
  -(\frac{t_{1}-t^{-1}_{1}}{2})\bigr)$

(b) $G=\Sp_{2n}:$ Same as in the above case of $G=\SO_{2n}$.

(c) $G=\SO_{2n+1}:$
$
\theta({\bf t})=
\diag \bigl(
\frac{t_{1} -t^{-1}_{1}}{2}, \dots, 
 \frac{t_{n} -t_{n}^{-1}}{2}, 0, 
  -(\frac{t_{n}-t^{-1}_{n}}{2}), \dots, 
  -(\frac{t_{1}-t^{-1}_{1}}{2})\bigr).$ 
\end{corollary}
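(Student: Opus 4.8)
The plan is to verify Corollary \ref{coro3} by directly combining Lemma \ref{lem2} with the explicit coordinates of the maximal tori in \eqref{eqnnew201} and \eqref{eqnnew202}. First I would take $\mathbf{t}\in T$ to be the diagonal matrix written there and apply the formula $\theta(g)=\tfrac{1}{2}(g-E^{-1}g^tE)$ from Lemma \ref{lem2}. Since $\mathbf{t}$ is diagonal we have $\mathbf{t}^t=\mathbf{t}$, so I only need to compute $E^{-1}\mathbf{t}E$ in each of the three cases. Because $E$ is (up to the scalar $2$ in the $(n+1,n+1)$ slot in type $B$, and up to signs in type $C$) the antidiagonal permutation matrix $w_0$, conjugation by $E$ reverses the order of the diagonal entries; the scalar and sign discrepancies cancel since they appear symmetrically on the left and right. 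Thus $E^{-1}\mathbf{t}E=\diag(t_1^{-1},\dots)$ with the entries of $\mathbf t$ listed in reverse order, and subtracting and halving yields exactly the stated expressions in parts (a), (b), (c).

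Concretely, for $G=\SO_{2n}$ or $\Sp_{2n}$ with $\mathbf{t}=\diag(t_1,\dots,t_n,t_n^{-1},\dots,t_1^{-1})$, reversing the diagonal gives $\diag(t_1^{-1},\dots,t_n^{-1},t_n,\dots,t_1)$, so $\tfrac12(\mathbf t - E^{-1}\mathbf t E)=\diag\bigl(\tfrac{t_1-t_1^{-1}}{2},\dots,\tfrac{t_n-t_n^{-1}}{2},-\tfrac{t_n-t_n^{-1}}{2},\dots,-\tfrac{t_1-t_1^{-1}}{2}\bigr)$, which is (a) and (b). For $G=\SO_{2n+1}$ with $\mathbf t=\diag(t_1,\dots,t_n,1,t_n^{-1},\dots,t_1^{-1})$, the middle entry $1$ is fixed by the reversal (and unaffected by the extra factor $2$ in $E_B$, since $1$ is its own inverse), giving the middle $0$ in (c) and the same outer entries as before. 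I would also remark in passing that the result is consistent with Lemma \ref{lemma1}: each output lies in $\frt_C$, $\frt_D$, or $\frt_B$ as described in \eqref{eq201}--\eqref{eq202}.

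There is essentially no main obstacle here; this is a routine matrix computation once Lemma \ref{lem2} is in hand. The only point requiring a moment of care is checking that the conjugation $\mathbf t\mapsto E^{-1}\mathbf t E$ genuinely acts by reversing the diagonal, i.e.\ that the non-permutation features of $E$ (the entry $2$ in type $B$ and the signs in the block form $E_C$) do not interfere. This is immediate because those features contribute reciprocal scalars from $E^{-1}$ on the left and $E$ on the right that cancel on each diagonal entry, and for a diagonal matrix the off-antidiagonal structure never mixes coordinates. Hence the corollary follows at once, as the paper asserts with "we immediately get the following."
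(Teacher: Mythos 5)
Your computation is correct and is exactly the verification the paper leaves implicit when it says the corollary follows "immediately" from Lemma \ref{lem2}: for diagonal $\mathbf{t}$ one has $\mathbf{t}^t=\mathbf{t}$, conjugation by $E$ reverses the diagonal (with the factor $2$ in $E_B$ and the signs in $E_C$ cancelling between $E^{-1}$ and $E$), and halving the difference gives the stated entries. No gaps; same approach as the paper.
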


The following result follows easily from Corollary~\ref{coro3} together with the description of the Weyl group.

\begin{proposition}\label{prop5}
Let $G=\SO_{2n}$, $\SO_{2n+1}$ or $\Sp_{2n}$ and let   $f:T\to \mathbb{C}$  be a regular map. Then, $f\in \Rep_{\poly}^\C(G)$  if and only if the following is satisfied:

(a) $G=\Sp_{2n}$ or $\SO_{2n+1}$: There exists a {\it symmetric} polynomial $P_f(x_{1},\ldots,x_{n})$ such that 
\begin{equation*}
f({\bf t})=P_f\left((\frac{t_{1}-t^{-1}_{1}}{2})^{2},\dots,(\frac{t_{n}-t^{-1}_{n}}{2})^{2}\right), \,\,\text{for}\,\,{\bf t} \in T_C \text{ given by}\, \eqref{eqnnew201}
\,\text{or}\,\,
 {\bf t} \in T_B  \, \text{ given by}\, \eqref{eqnnew202}.
\end{equation*}

(b) $G=\SO_{2n}$: There exist  symmetric polynomials $P_f(x_{1},\ldots,x_{n})$ and  $Q_f(x_{1},\ldots,x_{n})$ satisfying 
\begin{align*}
f({\bf t})&=P_f\left((\frac{t_{1}-t^{-1}_{1}}{2})^2,\dots, (\frac{t_{n}-t^{-1}_{n}}{2})^2\right) +\\
&\left((\frac{t_{1}-t^{-1}_{1}}{2})\dots (\frac{t_{n}-t^{-1}_{n}}{2})\right) 
Q_f\left((\frac{t_{1}-t^{-1}_{1}}{2})^2,\dots, (\frac{t_{n}-t^{-1}_{n}}{2})^2\right),
 \, \text{for}\,\, {\bf t}\in T_D \,\text{ given by}\, \eqref{eqnnew201}.
\end{align*}
\end{proposition}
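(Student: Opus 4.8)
The plan is to unwind Definition \ref{maindefi} and reduce to the classical description of the $W$-invariant polynomial functions on $\frt$ for types $B_n$, $C_n$, $D_n$. Here the parabolic is $P=G$, so its Levi is $L=G$ and $W_L=W$; hence $\Rep_{\poly}^\C(G)\subset \Rep^\C(G)\simeq \mathbb{C}[T]^W$ is by definition the image of $S(\frt^*)^W=\mathbb{C}[\frt]^W$ under $(\theta_{|T})^*$, i.e. it consists exactly of the functions ${\bf t}\mapsto g(\theta({\bf t}))$ on $T$ as $g$ ranges over the $W$-invariant polynomials on $\frt$.

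First I would make $(\theta_{|T})^*$ explicit using Corollary \ref{coro3}. Writing $y_i:=\frac{t_i-t_i^{-1}}{2}$, the coordinate functions $x_1,\dots,x_n$ on $\frt$ (cf. \eqref{eq201}, \eqref{eq202}) pull back under $\theta_{|T}$ to the regular functions $y_1,\dots,y_n$ on $T$, so for any polynomial $g=g(x_1,\dots,x_n)$ one has $(\theta_{|T})^*(g)=g(y_1,\dots,y_n)$ as an element of $\mathbb{C}[T]$ (the remaining entries $-x_i$, and the entry $0$ in type $B$, being determined). Since $\mathbb{C}[T]$ is a domain, it then suffices to run $g$ over an explicit description of $S(\frt^*)^W$ and record the resulting functions $g(y_1,\dots,y_n)$.

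Next I would invoke the classical invariant theory. For $G=\Sp_{2n}$ and $G=\SO_{2n+1}$ the Weyl group $W$ is the hyperoctahedral group, acting on $x_1,\dots,x_n$ by permutations and arbitrary sign changes, so $S(\frt^*)^W$ is exactly the ring of polynomials symmetric in $x_1^2,\dots,x_n^2$; replacing each $x_i$ by $y_i$ gives part (a), with $P_f$ the corresponding symmetric polynomial. For $G=\SO_{2n}$ the Weyl group is the subgroup of permutations together with an even number of sign changes, and $S(\frt^*)^W=\mathbb{C}[x_1^2,\dots,x_n^2]^{S_n}\oplus (x_1\cdots x_n)\,\mathbb{C}[x_1^2,\dots,x_n^2]^{S_n}$: it is a free module of rank two over the ring of polynomials symmetric in $x_1^2,\dots,x_n^2$, with basis $\{1,\,x_1\cdots x_n\}$ (note $(x_1\cdots x_n)^2=e_n(x_1^2,\dots,x_n^2)$ already lies in the base ring). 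Replacing $x_i$ by $y_i$ and naming the two symmetric polynomials $P_f$ and $Q_f$ yields part (b). In each case the converse is immediate and needs no invariant theory: a function $f$ written in the stated form is visibly $(\theta_{|T})^*$ applied to the evident $W$-invariant polynomial, hence lies in $\Rep_{\poly}^\C(G)$ (and is in particular $W$-invariant, consistent with $\Rep_{\poly}^\C(G)\subset\mathbb{C}[T]^W$).

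All the manipulations are routine; the only step requiring genuine care is the invariant theory of type $D_n$ — justifying (or quoting) that the $D_n$-invariants form a rank-two free module over the $B_n$-invariants with basis $\{1,\,x_1\cdots x_n\}$ — together with the elementary but essential observation that $\Sp_{2n}$ and $\SO_{2n+1}$ share the same Weyl group, so that case (a) covers both at once.
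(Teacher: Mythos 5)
Your proposal is correct and follows exactly the route the paper intends: the paper offers no written proof beyond asserting that the proposition "follows easily from Corollary~\ref{coro3} together with the description of the Weyl group," and your argument is precisely that — identify $\Rep_{\poly}^\C(G)$ as the image of $S(\frt^*)^W$ under $(\theta_{|T})^*$, use Corollary~\ref{coro3} to see $x_i\mapsto \frac{t_i-t_i^{-1}}{2}$, and quote the classical invariant theory of the hyperoctahedral group and its index-two subgroup in type $D_n$.
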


We recall the definition of $\lambda$-rings due to Grothendieck  from [AT, $\S$1].
\begin{definition}\label{defi6}
A {\it special $\lambda$-ring} is, by definition, a commutative ring $R$ with identity with a map
$$
 \lambda : R\to R[[q]] ,\,\,\,
x\mapsto \sum\limits_{i\geq 0}\lambda^{i}(x)q^{i},
$$
which satisfies the following:

(1) $\lambda^{0}(x)=1$

(2) $\lambda^{1}(x)=x$, for all $x\in R$

(3) $\lambda(x+y)=\lambda(x)\lambda(y)$, for all $x$, $y\in R$

(4) $\lambda (1)= 1+q$, and

(5) There are universal (independent of $R$) polynomials $P_{k}$ and $P_{k,l}$ over $\mathbb{Z}$ such that
\begin{align*}
& \lambda^{k}(xy)=P_{k}\left(\lambda^1(x),\ldots,\lambda^{k}(x),\lambda^1(y),\ldots,\lambda^{k}(y)\right)\\
\text{and}\quad & \lambda^{k}(\lambda^{l}x)=P_{k,l}\left(\lambda^{1}(x),\ldots,\lambda^{kl}(x)\right),\text{~~ for all~~ } k, l\geq 1.
\end{align*}
\end{definition}
The following example can be found in [FH, Exercise 23.39].
\begin{example}\label{exam7}
For any reductive group $G$, the representation ring $\Rep (G)$ is a special $\lambda$-ring, where for any representation $V$,
$$
\lambda([V]):=\sum\limits_{i\geq 0}\,[\Lambda^{i}(V)] q^{i}
$$
and extend it to all of $\Rep (G)$ by demanding the properly (3) of the definition of special $\lambda$-rings.
\end{example}

\begin{lemma}\label{lem8}
Let $G=\SO_{2n+1}$ or $\Sp_{2n}$. Then,

(a) The subring $\Rep_{\poly}(G)\subset \Rep (G)$ of $\omega_1$-polynomial characters is a special $\lambda$-subring,
where
$$ \Rep_{\poly}(G):= \Rep^\C_{\poly}(G)\cap \Rep (G).$$
(b) Moreover, the character
\begin{equation*}
\chi({\bf t})=\sum\limits^{n}_{i=1}\left(t^{2}_{i}+t^{-2}_{i}\right)\in \Rep_{\poly}(G), \,\,\,\text{for} \,\,  {\bf t}\in T_C  \,\,\text{ given by}\,\, \eqref{eqnnew201}
\,\,\text{or}\,\,
 {\bf t} \in T_B  \,\,\text{ given by}\,\, \eqref{eqnnew202} 
\end{equation*}
generates $\Rep_{\poly}(G)$ as a $\lambda$-ring, i.e., $\chi({\bf t})$, $\lambda^{2}(\chi({\bf t})),\ldots,\lambda^{n}(\chi({\bf t}))$ generate the ring $\Rep_{\poly}(G)$. (By the following identity \eqref{eqnew111}, $\chi \in \Rep_{\poly}(G)$.)
\end{lemma}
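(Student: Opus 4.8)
The plan is to reduce everything to a concrete computation in the ring of symmetric functions in the variables $u_i := \left(\frac{t_i - t_i^{-1}}{2}\right)^2$, using Proposition~\ref{prop5}(a) to identify $\Rep^\C_{\poly}(G)$ (for $G=\Sp_{2n}$ or $\SO_{2n+1}$) with the polynomial ring $\C[u_1,\dots,u_n]^{S_n}$ of symmetric polynomials. For part (a), I would first note that $\Rep(G)$ is a special $\lambda$-ring by Example~\ref{exam7}, so it suffices to show that $\Rep_{\poly}(G)$ is closed under the operations $\lambda^i$. Since $\Rep_{\poly}(G) = \Rep^\C_{\poly}(G) \cap \Rep(G)$ and $\Rep^\C_{\poly}(G)$ is a subalgebra, and since the $\lambda^i$ are (by the universal polynomials $P_{k,l}$ and, more elementarily, by expressing $\lambda^i$ via symmetric-function identities in the Adams/power operations) $\Z$-polynomial operations, the one thing to check is that $\lambda^i$ preserves membership in $\Rep^\C_{\poly}(G)$, i.e. preserves the property of being expressible as a symmetric polynomial in the $u_i$. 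Here I would use that the character-as-function-on-$T$ description of $\lambda^i([V])$ is $i$-th elementary symmetric function evaluated at the $T$-eigenvalues of $V$, and combine it with the closure of the subalgebra; the cleanest route is to observe that the whole question is intrinsic to the abstract special $\lambda$-ring $\Rep(G)$ together with the subalgebra cut out by the Cayley-transform condition, and that any subalgebra of a special $\lambda$-ring containing $1$ and closed under all $\lambda^i$ is a special $\lambda$-subring — so the real content is just the closure of $\Rep^\C_{\poly}(G)$ under $\lambda^i$, which follows once we have an explicit generating set as in part (b).

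For part (b), the key identity to establish is \eqref{eqnew111} (referenced but not yet displayed in the excerpt), namely that the virtual character $\chi({\bf t}) = \sum_{i=1}^n (t_i^2 + t_i^{-2})$ actually lies in $\Rep_{\poly}(G)$. This is a direct computation: $t_i^2 + t_i^{-2} = (t_i - t_i^{-1})^2 + 2 = 4u_i + 2$, so $\chi({\bf t}) = 4\sum_i u_i + 2n = 4\, e_1(u_1,\dots,u_n) + 2n$, which is manifestly a symmetric polynomial in the $u_i$, hence in $\Rep^\C_{\poly}(G)$ by Proposition~\ref{prop5}(a); and it is an honest (virtual) character of $G$ since $\sum_i(t_i^2+t_i^{-2})$ is $W$-invariant with integral weight-multiplicities (indeed it is $[V] - (\text{something})$ for a genuine representation $V$, e.g. $\Lambda^2$ of the standard representation or the standard representation composed with squaring, up to a constant). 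Then I would compute $\lambda^k(\chi)$: since $\lambda^k$ of $\sum_i (t_i^2 + t_i^{-2})$ is, on the level of functions on $T$, the $k$-th elementary symmetric function in the $2n$ quantities $\{t_i^2, t_i^{-2}\}_{i=1}^n$, and each such is an integer-coefficient polynomial in $\{4u_i+2\}$... more precisely $\lambda^k(\chi) = e_k(t_1^2, t_1^{-2}, \dots, t_n^2, t_n^{-2})$, which upon grouping the pair $\{t_i^2, t_i^{-2}\}$ is a symmetric polynomial in the quantities $t_i^2 + t_i^{-2} = 4u_i + 2$ and $t_i^2 \cdot t_i^{-2} = 1$. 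Hence $\lambda^k(\chi)$ is a symmetric polynomial in $u_1,\dots,u_n$ of degree exactly $k$ (the top-degree part being $4^k e_k(u_1,\dots,u_n)$ up to a nonzero scalar). Therefore $\chi, \lambda^2(\chi), \dots, \lambda^n(\chi)$ map, under the isomorphism $\Rep^\C_{\poly}(G) \cong \C[u_1,\dots,u_n]^{S_n} = \C[e_1,\dots,e_n]$, to a triangular (with respect to the grading by degree) system with invertible leading coefficients, hence they generate $\C[e_1,\dots,e_n]$ as a $\C$-algebra; intersecting with $\Rep(G)$ (i.e. with the integral lattice) and checking that the $\Z$-span suffices gives that they generate $\Rep_{\poly}(G)$ as a ring, a fortiori as a $\lambda$-ring.

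The main obstacle I anticipate is the bookkeeping at the integral level: Proposition~\ref{prop5} is stated over $\C$, and I must be careful that $\Rep_{\poly}(G) = \Rep^\C_{\poly}(G) \cap \Rep(G)$ is genuinely generated over $\Z$ (not just over $\C$) by $\chi, \lambda^2(\chi),\dots,\lambda^n(\chi)$ — the triangularity argument gives a $\C$-algebra generation statement and one must verify no denominators are forced, which should work because the leading coefficients $4^k$ are units after inverting nothing essential... actually they are not units in $\Z$, so the correct statement is that these elements generate the ring $\Rep_{\poly}(G)$ precisely as claimed (a $\Z$-subalgebra of $\Rep(G)$), and one checks directly that every symmetric polynomial in the $u_i$ that happens to be a genuine virtual character lies in the $\Z$-subalgebra generated by the $\lambda^k(\chi)$; this is the step requiring the most care. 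A secondary subtlety is simply pinning down the exact form of \eqref{eqnew111}, which fixes the normalization; everything else is the standard "elementary symmetric functions generate, triangularity implies generation" routine, plus the elementary observation that a unital subalgebra of a special $\lambda$-ring closed under all $\lambda^i$ is itself a special $\lambda$-ring (the universal polynomials $P_k, P_{k,l}$ restrict).
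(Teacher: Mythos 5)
Your overall strategy --- identify $\Rep_{\poly}(G)$ with a ring of symmetric polynomials, compute $\lambda(\chi)$ as a product of quadratic factors, and conclude generation by triangularity --- is the same as the paper's, but there is a genuine gap exactly where you flag it: the integral generation in part (b), and it is caused by your choice of coordinates. Working with $u_i=\bigl(\frac{t_i-t_i^{-1}}{2}\bigr)^2$ gives leading coefficients $4^k$, which are not units in $\Z$, so your triangularity argument only yields generation over $\C$; the sentence ``one checks directly that every symmetric polynomial in the $u_i$ that happens to be a genuine virtual character lies in the $\Z$-subalgebra generated by the $\lambda^k(\chi)$'' is precisely the assertion to be proved, and it is not checked. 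The fix (which is what the paper does) is to change variables to $x_i:=t_i^2+t_i^{-2}=4u_i+2$. One first records the identity \eqref{eqnew111}: $\Rep_{\poly}(G)=\Z_{\sym}[x_1,\dots,x_n]$, the ring of symmetric polynomials with \emph{integer} coefficients in the $x_i$ (this uses $\Rep(G)=\Rep(T)^W$ together with Proposition \ref{prop5}; note that $\Z_{\sym}[u_1,\dots,u_n]$ would be the wrong lattice, since $u_i=(x_i-2)/4$). Then $\lambda(\chi)=\prod_i(1+x_iq+q^2)$, so $\lambda^k(\chi)=e_k(x_1,\dots,x_n)+f_k(x_1,\dots,x_n)$ with $f_k\in\Z_{\sym}[x_1,\dots,x_n]$ of degree $<k$: the leading coefficient is now $1$, and induction on $k$ shows that $\chi,\lambda^2(\chi),\dots,\lambda^n(\chi)$ generate $\Z[e_1(x),\dots,e_n(x)]=\Rep_{\poly}(G)$ over $\Z$. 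You already have the key computation $\lambda^k(\chi)=e_k(t_1^2,t_1^{-2},\dots,t_n^2,t_n^{-2})$; you just need to read off its expansion in the $x_i$ rather than the $u_i$.

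For part (a) your logical structure (reduce to closure of the subring under the $\lambda^i$, then deduce closure from a generating set via the universal polynomials $P_k$, $P_{k,l}$ and axiom (3)) is sound and close to the paper's, modulo two points. First, the $\lambda$-operations are not defined on $\Rep^\C_{\poly}(G)$ (they are not additive, hence do not extend $\C$-linearly), so the closure statement must be phrased for $\Rep_{\poly}(G)=\Rep^\C_{\poly}(G)\cap\Rep(G)$ rather than for the complexified ring as you write. Second, since your (a) leans on the generating set from (b), the integrality gap above propagates into (a). With \eqref{eqnew111} in hand one can instead prove (a) directly, as the paper does, by checking $\lambda^d(e_k(x_1,\dots,x_n))\in\Z_{\sym}[x_1,\dots,x_n]$ for each generator, using $\lambda(x_i)=1+x_iq+q^2$ and the symmetry of the universal polynomials $P_d$.
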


\begin{proof}
(a):  Since 
\begin{equation} \Rep (G)= \Rep (T)^W, \,\,\, \text{for any connected reductive group}\,\,G,
\end{equation}
(cf. [BD, Proposition 2.1, Chapter VI]), by Proposition \ref{prop5}, it is easy to see that
\begin{equation} \label{eqnew111} \Rep_{\poly} (G)= \Z_{\sym}[t_1^2+t_1^{-2}, \dots, t_n^2+t_n^{-2}], \,\,\,\text{for}\,\, G=\SO_{2n+1}\,\,
\text{or}\,\, \Sp_{2n},
\end{equation}
where $ \Z_{\sym}[t_1^2+t_1^{-2}, \dots, t_n^2+t_n^{-2}]$ denotes the subring of the polynomial ring $ \Z[t_1^2+t_1^{-2}, \dots, t_n^2+t_n^{-2}]$
consisting of 
symmetric polynomials. Further, the ring  $ \Z_{\sym}[t_1^2+t_1^{-2}, \dots, t_n^2+t_n^{-2}]$ is generated by the elementary symmetric polynomials 
$e_1, \dots, e_n$ in the variables $x_1= t_1^2+t_1^{-2}, \dots, x_n=t_n^2+t_n^{-2}$ (cf. [F, $\S$6.2]). In particular, to prove that $\Rep_{\poly}(G)$
is a $\lambda$-subring of $\Rep (G)$, by the axioms (3) and (5) of the definition of special $\lambda$-rings as in Definition \ref{defi6}, it suffices to prove that $\lambda^d(e_k)\in \Rep_{\poly}(G)$, for any $d \geq 1$ and $1\leq k\leq n$. Now,
\begin{equation} \label{eqn112}\lambda (t_i^2+t_i^{-2})= 1+  (t_i^2+t_i^{-2}) q+ q^2, \,\,\, \text{for any}\,\, 1\leq i\leq n.
\end{equation}
Thus, for any  $1\leq k\leq n$, using the axioms  (3) and (5) of  Definition \ref{defi6}, 
\begin{align*}\lambda (e_k)&=\prod_{1 \leq i_1 < \dots <i_k\leq n}\,\lambda (x_{i_1}\dots x_{i_k})\\
&=\prod_{1 \leq i_1 < \dots <i_k\leq n}\, \left(\sum_{d\geq 0}\,P_d(x_{i_1}, \dots , x_{i_k}) q^d\right), \,\,\,\text{by the equation}\,\eqref{eqn112},
\end{align*}
for certain symmetric polynomials $P_d$ in $k$ variables  with integral coefficients.
From this it is easy to see that the coefficient of any $q^d$ in $\lambda (e_k)$ belongs to  $ \Z_{\sym}[t_1^2+t_1^{-2}, \dots, t_n^2+t_n^{-2}]$. This proves the (a)-part from the identity \eqref{eqnew111}.

(b):  By the identity \eqref{eqn112},
\begin{align*}
\lambda\left(\sum\limits^{n}_{i=1}(t^{2}_{i}+t^{-2}_{i})\right) &= \prod\limits^{n}_{i=1}\left(1+(t^{2}_{i}+t^{-2}_{i})q+q^{2}\right)\\
&= \sum\limits^{2n}_{k=0}\left(e_{k}(t^{2}_{1}+t^{-2}_{1},\ldots,t^{2}_{n}+t^{-2}_{n})+f_{k}(t^{2}_{1}+t^{-2}_{1},\ldots,t^{2}_{n}+t^{-2}_{n})\right)q^{k},
\end{align*}
where $f_{k}$ is a symmetric polynomial with integral coefficients of degree $<k$. From this the (b)  part of the lemma follows by using the identity \eqref{eqnew111}.
\end{proof}
\begin{lemma} \label{lemmaso} For  $\SO_{2n}$, $\prod\limits^{n}_{i=1}(t_{i}-t^{-1}_{i})$ is the character of the virtual representation 
$$[V(2\omega_n)]-[V(2\omega_{n-1})],$$
where $\omega_i$ is the $i$-th fundamental representation of $\Spin_{2n}$.
\end{lemma}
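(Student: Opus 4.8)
The plan is to invoke the Weyl character formula for $\Spin_{2n}$ (type $D_n$) and reduce the identity to two classical determinant evaluations. Write $x_i:=t_i$, so that under the standard identification the character $t_i$ of the torus is $e^{\epsilon_i}$ in the usual basis $\{\epsilon_i\}$ of weights. With Bourbaki's conventions for $D_n$ one has $\rho=\sum_{i=1}^n\omega_i=\sum_i(n-i)\epsilon_i$, $2\omega_n=\epsilon_1+\cdots+\epsilon_n$ and $2\omega_{n-1}=\epsilon_1+\cdots+\epsilon_{n-1}-\epsilon_n$; hence $2\omega_n+\rho=(n,n-1,\dots,2,1)$ and $2\omega_{n-1}+\rho=(n,n-1,\dots,2,-1)$ in $\epsilon$-coordinates. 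Let $\chi_\mu$ denote the character of $V(\mu)$, so that $\chi_\mu=A_{\mu+\rho}/A_\rho$ with $A_\nu:=\sum_{w\in W(D_n)}(-1)^{\ell(w)}e^{w\nu}$.

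First I would record the standard determinantal form of the $D_n$ numerator. Writing each $w\in W(D_n)$ as a permutation followed by an even number of sign changes and using the indicator $\tfrac12(1+\prod_i\varepsilon_i)$ to impose the parity of the signs, one obtains $A_\nu=\tfrac12\bigl(D^+_\nu+D^-_\nu\bigr)$, where $D^{\pm}_\nu:=\det\bigl(x_i^{\nu_j}\pm x_i^{-\nu_j}\bigr)_{1\le i,j\le n}$; the overall sign is immaterial because only the ratio $A_{\mu+\rho}/A_\rho$ is used. Two elementary facts will then be exploited: for each $k$, $D^+_\nu$ is unchanged and $D^-_\nu$ changes sign under $\nu_k\mapsto-\nu_k$; and both determinants factor after the substitution $y_i:=x_i+x_i^{-1}$, since $x^k-x^{-k}=(x-x^{-1})\cdot(\text{monic polynomial of degree }k-1\text{ in }y)$ while $x^k+x^{-k}$ is a monic polynomial of degree $k$ in $y$, equal to $2$ when $k=0$. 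Column operations then reduce the relevant matrices to (scaled) Vandermonde matrices in the $y_i$, giving
\begin{align*}
D^-_{(n,n-1,\dots,1)}&=\Bigl(\prod_{i=1}^n(x_i-x_i^{-1})\Bigr)\prod_{i<j}(y_i-y_j),\\
D^+_{(n-1,n-2,\dots,1,0)}&=2\prod_{i<j}(y_i-y_j),
\end{align*}
while $D^-_{(n-1,\dots,1,0)}=0$ since its last column vanishes; hence $A_\rho=\tfrac12D^+_\rho=\prod_{i<j}(y_i-y_j)$.

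To finish, write $\nu=(n,n-1,\dots,1)$. Since $D^+$ is even and $D^-$ odd in the last coordinate, $D^+_{(n,\dots,2,-1)}=D^+_\nu$ and $D^-_{(n,\dots,2,-1)}=-D^-_\nu$, so
$$A_{2\omega_n+\rho}-A_{2\omega_{n-1}+\rho}=\tfrac12\bigl(D^+_\nu+D^-_\nu\bigr)-\tfrac12\bigl(D^+_\nu-D^-_\nu\bigr)=D^-_\nu ;$$
dividing by $A_\rho$ yields $\chi_{2\omega_n}-\chi_{2\omega_{n-1}}=\prod_{i=1}^n(x_i-x_i^{-1})=\prod_{i=1}^n(t_i-t_i^{-1})$, which is the assertion. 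The only mildly delicate point is the bookkeeping of sign conventions in the $D_n$ Weyl numerator (the precise form of $w\nu$ and of $(-1)^{\ell(w)}$), but since only the ratio $A_{\mu+\rho}/A_\rho$ enters, any consistent convention works, so this is not a genuine obstacle; a direct computation for $n=2$, where $D_2=A_1\times A_1$ and $\chi_{2\omega_2}=t_1t_2+1+t_1^{-1}t_2^{-1}$, $\chi_{2\omega_1}=t_1t_2^{-1}+1+t_1^{-1}t_2$, confirms the formula. Alternatively one can bypass the Weyl character formula: the half-spin characters are $\chi_{\omega_n}=\sum_{\varepsilon:\,\prod_i\varepsilon_i=1}\prod_it_i^{\varepsilon_i/2}$ and $\chi_{\omega_{n-1}}=\sum_{\varepsilon:\,\prod_i\varepsilon_i=-1}\prod_it_i^{\varepsilon_i/2}$, so that $\chi_{\omega_n}^2-\chi_{\omega_{n-1}}^2=\bigl(\prod_i(t_i^{1/2}+t_i^{-1/2})\bigr)\bigl(\prod_i(t_i^{1/2}-t_i^{-1/2})\bigr)=\prod_i(t_i-t_i^{-1})$; one then invokes the classical decompositions $V(\omega_n)^{\otimes2}\cong V(2\omega_n)\oplus\bigoplus_{j}\Lambda^j\C^{2n}$ and $V(\omega_{n-1})^{\otimes2}\cong V(2\omega_{n-1})\oplus\bigoplus_j\Lambda^j\C^{2n}$ (with the same exterior powers $\Lambda^j\C^{2n}$, $j<n$, occurring in both) to obtain $[V(\omega_n)]^2-[V(\omega_{n-1})]^2=[V(2\omega_n)]-[V(2\omega_{n-1})]$ in $\Rep(\Spin_{2n})$.
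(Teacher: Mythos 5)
Your proof is correct, but it takes a genuinely different route from the paper's. The paper argues purely at the level of weight multiplicities: starting from the decomposition $\Lambda^n(\C^{2n})=V(2\omega_n)\oplus V(2\omega_{n-1})$, it notes that the weights $t_1^{d_1}\cdots t_n^{d_n}$ with some $d_i=0$ occur with equal multiplicity in both summands (they are matched by the diagram automorphism and $W$-invariance of multiplicities), while for weights with all $d_i=\pm 1$ the multiplicity in $\Lambda^n(\C^{2n})$ is $1$ and $W(D_n)$-invariance forces $V(2\omega_n)$ (resp. $V(2\omega_{n-1})$) to carry exactly those with an even (resp. odd) number of $d_i=-1$; summing $(-1)^{{\bf d}_-}t^{\bf d}$ over $\{\pm1\}^n$ gives the product. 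Your main argument instead runs the Weyl character formula for $D_n$, reducing the claim to the even/odd behaviour of the determinants $D^{\pm}_\nu$ in the last coordinate together with two Vandermonde-type evaluations after the substitution $y_i=t_i+t_i^{-1}$; this is more computational but self-contained, and the identity $A_{2\omega_n+\rho}-A_{2\omega_{n-1}+\rho}=D^-_\nu$ makes the cancellation completely transparent. Your alternative via half-spin characters is the closest in spirit to the paper (both exploit a known representation whose two irreducible "halves" are swapped by the outer automorphism) and is arguably the shortest, at the price of quoting the decomposition of $S^{\pm}\otimes S^{\pm}$ into exterior powers. All the steps you use check out (including the parity bookkeeping for $W(D_n)$ and the $n=2$ sanity check), so any of the three arguments would serve as a proof of the lemma.
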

\begin{proof} We first recall (cf. [BD, $\S$5.5, Chapter  VI]) that 
$$\Lambda^n(\C^{2n})=V' \oplus V^{''},\,\,\,\text{where}\,\,V'= V(2\omega_n), V^{''}=V(2\omega_{n-1})$$
as representations of $\SO_{2n}$. Recall from [Bo, Planche IV] that 
$$2\omega_n = t_1\dots t_n\,\,\,\text{and}\,\, 2\omega_{n-1} = t_1\dots t_{n-1}t_n^{-1}.$$
Moreover, the diagram automorphism of  $\SO_{2n}$ gives rise to a linear isomorphism between $V'$ and  
$V^{''}$. Under this isomorphism any weight space of $V'$ of weight  $t_1^{d_1}\dots t_{n-1}^{d_{n-1}}t_n^{d_n}$ corresponds to the weight space of $V^{''}$ of weight  $t_1^{d_1}\dots t_{n-1}^{d_{n-1}}t_n^{-d_n}$. Any weight vector of $\Lambda^n(\C^{2n})$ clearly has weight ${\bf t}^{\bf d}:=t_1^{d_1}\dots t_n^{d_n}$, for $d_i\in \{0, \pm 1\}$.   By using the invariance of the dimension of weight spaces under $W$-action, we see that
$$\dim \left( [V']_{{\bf t}^{\bf d}}\right)=\dim \left( [V^{''}]_{{\bf t}^{\bf d}}\right), \,\,\,\text{if at least one of}\,\, d_i=0,$$
where $[V']_{{\bf t}^{\bf d}}$ denotes the weight space of $V'$ of weight ${\bf t}^{\bf d}$.
So, such weights  ${\bf t}^{\bf d}$, with at least one $d_i=0$,  do not contribute to the character of $[V(2\omega_n)]-[V(2\omega_{n-1})]$.
Let us now consider the weight spaces of $V'$ and $V^{''}$ of weight ${\bf t}^{\bf d}$ where none of $d_i$ is zero. In this case, define
$${\bf d}_-:=\sharp \{1\leq i\leq n: d_i=-1\}.$$ 
Again,  using the invariance of the dimension of weight spaces under $W$-action, we see that
\begin{equation} \label{eqn301}\dim [V']_{{\bf t}^{\bf d}}=1 \,\,\,\text{if}\,\, {\bf d}_-\,\,\,\text{is even},
\end{equation}
and zero otherwise. 
 Similarly,
\begin{equation} \label{eqn302}\dim [V^{''}]_{{\bf t}^{\bf d}}=1 \,\,\,\text{if}\,\, {\bf d}_-\,\,\,\text{is odd},
\end{equation}
and zero otherwise. Combining the equations \eqref{eqn301} and \eqref{eqn302}, we get the lemma.
\end{proof}
\begin{remark}\label{rem9}
(a) It is easy to see that for the case of $G=\SO_{2n}$, $\Rep_{\poly}(G):= \Rep^\C_{\poly}(G) \cap \Rep (G)$ is {\em not} a $\lambda$-subring of
 $\Rep (G)$, by considering the function
$
\prod\limits^{n}_{i=1}(t_{i}-t^{-1}_{i})\in \Rep_{\poly}(G).
$

(b) Observe that $\chi(\mathfrak{t})=\sum\limits^{n}_{i=1}(t^{2}_{i}+t^{-2}_{i})$ is the character of the following virtual representation:
\begin{itemize}
\item[(1)] $G=\Sp_{2n}: [S^2 V]-[\Lambda^2 V]$, where $V$ is the standard representation of $\Sp_{2n}$ in $V=\mathbb{C}^{2n}$.

\item[(2)] $G=\SO_{2n+1}: [S^2 V] - [\Lambda^2 V] -[\epsilon]$,
where $V$ is the standard representation of $\SO_{2n+1}$ in $V=\mathbb{C}^{2n+1}$ and $\epsilon$ is the one dimensional trivial representation.
\end{itemize}
\end{remark}
\begin{lemma} Let $G=\Sp_{2n} (n\geq 2)$, $\SO_{2n+1} (n\geq 2) $ or $\SO_{2n} (n\geq 4)$. Then, no non-trivial irreducible representation $V(\lambda)$ (i.e., $\lambda \neq 0$) belongs to $\Rep^\C_{\poly}(G)$.
\end{lemma}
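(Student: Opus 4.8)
The strategy is to use the explicit description of $\Rep^\C_{\poly}(G)$ coming from Proposition \ref{prop5} together with the characterization $\Rep^\C(G) = \C[T]^W$ to compare the lowest-degree behaviour of a genuine irreducible character against the lowest-degree behaviour of a polynomial character near the identity of $T$. Concretely, for each classical group $G$ in the list, the Springer morphism $\theta = \theta_{\omega_1}$ sends $\mathbf{t}$ to a diagonal matrix whose entries are $\pm\frac{t_i - t_i^{-1}}{2}$ (and a $0$ for type $B$), by Corollary \ref{coro3}. Writing $u_i := \frac{t_i - t_i^{-1}}{2}$, Proposition \ref{prop5} says that a polynomial character is a symmetric polynomial in the $u_i^2$ (type $B$, $C$) or such a symmetric polynomial plus $(u_1\cdots u_n)$ times another such (type $D$). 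The key feature I want to exploit is that every nonzero polynomial character is a \emph{non-constant} polynomial in the $u_i$ that \emph{vanishes to even total order} in the $u_i$ at $\mathbf{t} = 1$ (where all $u_i = 0$), except possibly for the $D$-type ``Pfaffian'' term which has total order exactly $n$; in all cases the \emph{lowest-order part} of a nonzero non-constant polynomial character, expanded near $\mathbf{t}=1$ in the natural coordinates, is $W$-invariant and has a rigidly constrained shape.

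\textbf{Key steps.} First I would record that $\Rep^\C(G) = \C[T]^W$, so any irreducible character $\chi_\lambda$ with $\lambda \neq 0$ is a genuine $W$-invariant Laurent polynomial on $T$ whose Newton polytope is the convex hull of the $W$-orbit of $\lambda$ together with lower weights; in particular its value at $\mathbf{t} = 1$ is $\dim V(\lambda) \geq 2$, and more importantly its expansion near $\mathbf{t} = 1$ starts with the constant $\dim V(\lambda)$ and then has a quadratic leading correction determined by $\lambda$. Second, I would analyse the local structure at $\mathbf{t}=1$ of an element of $\Rep^\C_{\poly}(G)$ as described by Proposition \ref{prop5}: setting $t_i = e^{y_i}$ we have $u_i = \sinh(y_i) = y_i + O(y_i^3)$, so a nonzero polynomial character either is a nonzero constant (the case $\lambda = 0$), or is a symmetric polynomial in the $u_i^2$ with no constant term plus possibly the Pfaffian correction, hence vanishes at $\mathbf{t}=1$ — unless it has a nonzero constant term, in which case after subtracting that constant the remainder again vanishes at $\mathbf{t}=1$ to order $\geq 2$. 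Now compare: for $\chi_\lambda$ with $\lambda \neq 0$, the function $\chi_\lambda - \dim V(\lambda)$ vanishes at $\mathbf{t}=1$ and its leading term is the nonzero $W$-invariant quadratic form $q_\lambda(y) = \sum_{\mu} m_\mu \langle \mu, y\rangle^2$ (sum over weights $\mu$ of $V(\lambda)$ with multiplicity $m_\mu$), which is a \emph{negative-definite}-type object on the real torus directions — in particular $q_\lambda$ is not identically zero. For this to match a polynomial character, the polynomial character, minus its constant term, must have leading quadratic part equal to $q_\lambda$ up to scalar. Third — and this is where the rigidity bites — I would show that no symmetric polynomial in the $u_i^2$ (type $B$, $C$) and no such polynomial plus Pfaffian term (type $D$) can have leading quadratic part proportional to $q_\lambda$ for $\lambda \neq 0$: a symmetric polynomial in the $u_i^2$ has lowest-degree part a multiple of $e_1(u_1^2, \dots, u_n^2) = \sum u_i^2$, which near $\mathbf{t}=1$ expands as $\sum y_i^2$ (up to higher order); so the only candidate is $q_\lambda \propto \sum y_i^2$, i.e. $\sum_\mu m_\mu \langle \mu, y\rangle^2 \propto \sum_i y_i^2$. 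I would then rule this out by a direct weight computation for each type: for $\lambda \neq 0$ the quadratic form $\sum_\mu m_\mu \langle \mu, \cdot\rangle^2$ is never proportional to the standard form $\sum y_i^2$ (e.g. compare coefficients of $y_i^2$ versus $y_i y_j$, using that $V(\lambda)$ has weights off the coordinate axes as soon as $\lambda \neq 0$ for $n \geq 2$, resp. $n \geq 4$), and in type $D$ the Pfaffian term contributes only in degree $n \geq 4 > 2$ so it does not affect the quadratic part. For $n$ large this is immediate; one should double-check the small-rank cases in the hypotheses ($\Sp_4$, $\SO_5$, $\SO_8$) by hand, which is why the statement imposes those lower bounds.

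\textbf{Main obstacle.} The routine part is the local expansion; the genuinely delicate part is the last step — showing the quadratic form $q_\lambda$ attached to any nonzero dominant $\lambda$ is never a multiple of $\sum_i y_i^2$ on $\frt_{\mathbb{R}}$. The cleanest way I see is: $q_\lambda$ is $W$-invariant, and $W$ for each of these types acts irreducibly on $\frt^*$ except that for $\SO_{2n}$ one must be mildly careful — but in all cases the space of $W$-invariant quadratic forms on $\frt$ is one-dimensional, spanned by the Killing form restricted to $\frt$, which \emph{is} proportional to $\sum y_i^2$ in these coordinates. So in fact $q_\lambda$ \emph{is} automatically a multiple of $\sum y_i^2$! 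The real obstruction is therefore not shape but \emph{sign/magnitude matching with higher-order terms}: one cannot kill a genuine character using only the single generator $\sum u_i^2$ because the higher Taylor coefficients of $\chi_\lambda$ (cubic and beyond in $y$) will not agree with those of any polynomial $P_f(u^2)$. Thus the argument I would actually push through is: if $\chi_\lambda \in \Rep^\C_{\poly}(G)$, then $\chi_\lambda = c + P(e_1(u^2), \dots)$; matching quadratic terms forces the linear term in $\sum u_i^2$ with a specific coefficient $c_2 = c_2(\lambda)$, and then I compare the coefficient of, say, $y_1^4$ (or $y_1^2 y_2^2$) on both sides: on the character side it is a polynomial in $\lambda$ computed from weight multiplicities, on the polynomial-character side it is forced by $c_2$ and the next elementary symmetric function's coefficient, and these two cannot be reconciled for $\lambda \neq 0$. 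I expect this final numerical incompatibility to be the crux, and it is plausible the author instead argues more slickly via the $\lambda$-ring structure (Lemma \ref{lem8}) or via a degree/leading-term count on Newton polytopes — I would first try the Newton-polytope route: the Newton polytope of a nonzero polynomial character (as a Laurent polynomial in $t_i$) is, by Proposition \ref{prop5}, the polytope of a polynomial in $t_i^2 + t_i^{-2}$ up to the Pfaffian correction, hence is a \emph{rescaling by $2$} of a lattice polytope, whereas the Newton polytope of $\chi_\lambda$ is the $W$-orbit hull of $\lambda$ which for $\lambda \neq 0$ is not $2$-divisible as a lattice polytope (e.g. $\lambda = \omega_1$ gives a polytope with odd-primitive vertices). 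That dilation-parity obstruction is probably the cleanest proof and is what I would write up, handling the low-rank cases separately.
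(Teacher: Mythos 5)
Your proposal does not close; both of the concrete routes you end up advocating have gaps, and the paper's actual argument uses a different (and, in hindsight, necessary) idea. The local route first: after setting $t_i = e^{y_i}$, the functions $e_k(u_1^2,\dots,u_n^2)$ with $u_i=\sinh y_i$ begin with $e_k(y_1^2,\dots,y_n^2)$ plus higher-order terms, hence form a regular system of parameters for the ring of $W$-invariant formal power series at $\mathbf{t}=1$ (for types $B_n$, $C_n$; similarly with the extra Pfaffian-type generator in type $D_n$). Consequently \emph{every} $W$-invariant function on $T$, in particular every irreducible character $\chi_\lambda$, is locally a formal power series in these generators, so no comparison of finitely many Taylor coefficients (quadratic, quartic, or otherwise) can produce a contradiction: the obstruction to $\chi_\lambda\in\Rep^\C_{\poly}(G)$ is global (polynomiality in the $u_i^2$, not formal expressibility), and the ``numerical incompatibility'' you hope for in the quartic terms does not exist. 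The Newton-polytope route has a different gap: it only constrains the \emph{vertices} of the support, i.e.\ the extreme weights $W\lambda$, whereas membership in $\Rep^\C_{\poly}(G)$ forces \emph{every} weight occurring in $V(\lambda)$ to have all exponents even (types $B$, $C$), resp.\ all even or all odd (type $D$). For $\lambda=2\omega_1$ in $\Sp_{2n}$, say, $V(\lambda)=S^2(\C^{2n})$ has Newton polytope with vertices $\pm 2x_i$, all in $(2\Z)^n$, so your dilation-parity test passes; yet the weight $x_1-x_2$, i.e.\ $t_1t_2^{-1}$, occurs in $S^2(\C^{2n})$ and has odd exponents, so $\chi_{2\omega_1}\notin\Rep^\C_{\poly}(\Sp_{2n})$ while your test cannot detect it.

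The paper's proof is exactly the repair of this last point. From Proposition \ref{prop5} one reads off that the full support of a polynomial character lies in $(2\Z)^n$ (resp.\ in $(2\Z)^n\cup(1+2\Z)^n$ for type $D$); the parity of the highest weight then forces all coefficients $d_i$ of $\lambda=\sum d_i\omega_i$ to be even (with the extra divisibility of $d_n$ in type $B$), and one then exhibits a specific \emph{non-extreme} weight in the support with the wrong parity, namely the weight of $f_{i_0}\cdot v_+$ for a suitable simple lowering operator $f_{i_0}$ (with the additional step $f_if_n^{2d}\cdot v_+$ when $\lambda=2d\omega_n$ in type $C$). If you want to salvage your write-up, replace the polytope-vertex argument by this support-parity argument applied to weights just below the highest one; your first reduction (using Proposition \ref{prop5} to identify the allowed supports) is correct and coincides with the paper's starting point.
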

\begin{proof} We first prove the lemma for $G=\Sp_{2n}$. 	Write $\lambda= \sum_{i=1}^n d_i\omega_i$, where $\omega_i$ are the fundamental 
weights. The highest weight of  $V(\lambda)$ is given by $t_1^{d_1+\dots +d_n}t_2^{d_2+\dots +d_n}\dots
t_n^{d_n}.$ By Proposition \ref{prop5}, for $[V(\lambda)]$ to lie in $\Rep^\C_{\poly}(G)$,  $[V(\lambda)]$ must, in particular,  lie in  $\C[t_1^{\pm 2}, \dots, t_n^{\pm 2}]$. In particular, each $d_i$ should be even. Choose $i_o$ such that $d_{i_o}\neq 0$. If $i_o< n$, then  the exponent of $t_{i_o}$
in the  weight  of $f_{i_o}\cdot v_+$ (which is a nonzero vector) is odd, where $v_+$ is a highest weight vector of $V(\lambda)$ and  $f_{i_o}$ is a root vector of negative weight $-\alpha_{i_o}$ . So, assume that 
$\lambda = 2d\omega_n$. In this case, $f_n^{2d}\cdot v_+\neq 0$ and $f_n^{2d+1}\cdot v_+ = 0$. Thus, there exists an $i<n$ such that $f_if_n^{2d}\cdot v_+\neq 0$.
Again,  the exponent of  $t_{i}$ in the weight of $f_if_n^{2d}\cdot v_+$ is odd. This proves the lemma for  $G=\Sp_{2n}$.

We next consider the case of $G=\SO_{2n+1}$. In this case any irreducible module has highest weight 
$\lambda=\sum_{i=1}^n d_i\omega_i$, where $d_n$ is even.  The highest weight of  $V(\lambda)$ is given by $t_1^{d_1+\dots +d_{n-1}+\frac{d_n}{2}}t_2^{d_2+\dots +d_{n-1}+\frac{d_n}{2}}\dots
t_n^{\frac{d_n}{2}}.$  Again, by Proposition \ref{prop5}, for $[V(\lambda)]$ to lie in $\Rep^\C_{\poly}(G)$,  $[V(\lambda)]$ must, in particular,  lie in  $\C[t_1^{\pm 2}, \dots, t_n^{\pm 2}]$. For  the exponent of each $t_i$ in the highest weight of $V(\lambda)$  to be even, we must have each $d_i$ to be even and $d_n$ must be divisible by $4$.  Assuming this restriction on $\lambda$, choose $i_o$ such that $d_{i_o}\neq 0$. Then,  the exponent of  $t_{i_o}$ in the  weight of  the nonzero vector $f_{i_o}\cdot v_+$  is odd. This proves the lemma for  $G=\SO_{2n+1}.$

We finally consider the case of $G=\SO_{2n}$. In this case any irreducible module has highest weight 
$\lambda=\sum_{i=1}^n d_i\omega_i$, where $d_{n-1}+d_n$ is even, say equal to $2m$.  The highest weight of  $V(\lambda)$ is given by $t_1^{d_1+\dots +d_{n-2}+m}t_2^{d_2+\dots +d_{n-2}+m}\dots t_{n-2}^{d_{n-2}+m}
t_{n-1}^{m} t_n^{\frac{d_n-d_{n-1}}{2}}.$  By Proposition \ref{prop5}, for $[V(\lambda)]$ to lie in
 $\Rep^\C_{\poly}(G)$,  $[V(\lambda)]$ must, in particular,  lie in  
$$\C[t_1^{\pm 2}, \dots, t_n^{\pm 2}] \oplus (t_1\dots t_n)\C[t_1^{\pm 2}, \dots, t_n^{\pm 2}].$$
 In particular, 
 each of $d_1, \dots, d_{n}$ is even.
  Assuming this restriction on $\lambda$, choose $i_o$ such that $d_{i_o}\neq 0$. Then,  the weight of  the nonzero vector  $f_{i_o}\cdot v_+$  has 
exactly two exponents either odd or exactly two exponents even. This proves the lemma for  $G=\SO_{2n}$ (for $n \geq 4$).
\end{proof}
\section{Specialization of Theorem \ref{thmmain} to $G=\Sp_{2n}$ and $P$ any maximal parabolic}

We follow the notation from Section 6 and take $n \geq 2$. 

 Let $V=\mathbb{C}^{2n}$ be equipped with the
nondegenerate symplectic form $\langle \,,\,\rangle$ so that its matrix
$\bigl(\langle e_i,e_j\rangle\bigr)_{1\leq i,j \leq 2n}$ in the
standard basis $\{e_1,\dots, e_{2n}\}$ is given by the matrix $E_C$ of Section 6.
For  $1\leq r \leq n$, we let $\IG(r,2n)$ to be
the set of $r$-dimensional isotropic
subspaces of $V$ with respect to the form $\langle\,,\,\rangle$, i.e.,
$$\IG(r,2n):=\{M\in \Gr(r,2n): \langle v,v'\rangle=0,\ \forall\,  v,v'\in M\}.$$
We take $B_C:=B\cap \Sp_{2n}$ as the Borel subgroup of $ \Sp_{2n}$, where $B$ is the standard Borel subgroup of $\SL_{2n}$ consisting of upper triangular matrices of determinant $1$.
 Then, $\IG(r,2n)$  is the quotient $\Sp_{2n}/P_r^C$ of $\Sp_{2n}$ by
 the standard maximal parabolic subgroup
$P_r^C$  with $\Delta \setminus \{\alpha_r\}$ as the set of simple roots of its Levi component
$L_r^C$. (Again we take $L_r^C$ to be the unique Levi subgroup of $P_r^C$
 containing $T_C$.) 
Then, 
$$L_r^C\simeq \GL_r\times \Sp_{2(n-r)}.$$
In this case,  by the identity \eqref{eqn2} and Proposition \ref{prop5} (a), 
 $$\Rep^\mathbb{C}_{\poly}(L_r^C) \simeq \C_{\sym}[(\frac{t_1-t_1^{-1}}{2}),  \dots ,   (\frac{t_r-t_r^{-1}}{2})]\otimes_\C \C_{\sym}[(\frac{t_{r+1}-t_{r+1}^{-1}}{2})^2,  \dots ,   (\frac{t_n-t_n^{-1}}{2})^2],$$
where $ \C_{\sym}$ denotes the subalgebra of the polynomial ring consisting of symmetric polynomials. Further, by Lemma \ref{lem8}, the identity \eqref{eqnew111}  and Remark \ref{rem9} (b), 
 $$\C_{\sym}[(\frac{t_{r+1}-t_{r+1}^{-1}}{2})^2,  \dots ,   (\frac{t_n-t_n^{-1}}{2})^2]$$ is generated (as a $\C$-algebra) by the virtual representations:
$$\{\lambda^d\left([S^2(V_{2(n-r)})] - [\Lambda^2(V_{2(n-r)})]\right)\}_{1 \leq d \leq n-r},$$
where $V_{2(n-r)} =\C^{2(n-r)}$ is the standard representation of $\Sp_{2(n-r)}$ and $\lambda$ is the $\lambda$-ring structure on $\Rep(G)$ as in Example \ref{exam7}.

\begin{proposition} \label{prop8}The map $\xi^P: \Rep^\mathbb{C}_{\poly}(L_r^C)  \to H^*(\IG(r,2n), \C)$ of Theorem \ref{thmmain}, where $P=
P_r^C$,  takes
$$(t_1-t_1^{-1})+ \dots +  (t_r-t_r^{-1})\mapsto 2\epsilon^P_{s_r},$$
and
\begin{align*}(t_{r+1}-t_{r+1}^{-1})^2+ \dots +  (t_n-t_n^{-1})^2&=[S^2(V_{2(n-r)})] - [\Lambda^2(V_{2(n-r)})] -2(n-r)[\epsilon]\\
&\mapsto 4\left( (\epsilon^P_{s_r})^2+2\left( \sum_{j=r+1}^{n-1}\, (\epsilon^P_{s_j})^2\right)+ (\epsilon^P_{s_n})^2-
2  \sum_{j=r}^{n-1}\, (\epsilon^P_{s_j}\epsilon^P_{s_{j+1}})\right),
\end{align*}
where $\{\epsilon^P_{w}\}_{w\in W^P}$ is the Schubert basis of $H^*(G/P)$ as in Section 2.
\end{proposition}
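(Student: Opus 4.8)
The plan is to compute $\xi^P_{\omega_1}$ on the two displayed generators by unwinding the definition of $\xi^P_\lambda = \beta^P\circ(\theta(P)^*)^{-1}$ from Theorem \ref{thmmain}. For the first generator, one preimage under $\theta^*$ of the symmetric polynomial $\sum_{i=1}^r x_i$ (with $x_i\in\mathfrak{t}^*$ the coordinate linear forms) is $\sum_{i=1}^r x_i$ itself, since by Corollary \ref{coro3}(b) the Springer map sends $\mathbf{t}$ to $\operatorname{diag}\bigl(\tfrac{t_1-t_1^{-1}}{2},\dots\bigr)$, so $\theta^*\bigl(\sum_{i=1}^r x_i\bigr)(\mathbf t) = \sum_{i=1}^r \tfrac{t_i-t_i^{-1}}{2}$, i.e. $\theta^*\bigl(2\sum_{i=1}^r x_i\bigr) = \sum_{i=1}^r (t_i-t_i^{-1})$. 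Hence $\xi^P\bigl(\sum_{i=1}^r(t_i-t_i^{-1})\bigr) = \beta^P\bigl(2\sum_{i=1}^r x_i\bigr) = 2\beta^P(x_1+\dots+x_r)$. Now I identify $x_1+\dots+x_r$ as (the image in $\mathfrak{t}_C^*$ of) the fundamental weight $\omega_r$ for $\Sp_{2n}$ using the coordinates of Section 6 (from [Bo, Planche III], $\omega_r = x_1+\dots+x_r$), and then invoke \eqref{eqnborel}, $\beta(\omega_r)=\epsilon^B_{s_r}$, together with compatibility of $\beta$ with the projection $G/B\to G/P$, to conclude $\beta^P(\omega_r) = \epsilon^P_{s_r}$. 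This gives the first formula.

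For the second generator, I first check the asserted character identity $(t_{r+1}-t_{r+1}^{-1})^2+\dots+(t_n-t_n^{-1})^2 = [S^2 V_{2(n-r)}]-[\Lambda^2 V_{2(n-r)}]-2(n-r)[\epsilon]$ by expanding $\sum_j(t_j^2 - 2 + t_j^{-2})$ and comparing with the character $\sum_{j}(t_j^2+t_j^{-2})$ of $[S^2]-[\Lambda^2]$ on $\Sp_{2(n-r)}$ from Remark \ref{rem9}(b) (shifting indices by $r$); this is the routine bookkeeping step. Next, I apply $(\theta^*)^{-1}$: writing $u_i := \tfrac{t_i-t_i^{-1}}{2}$, we have $\theta^*\bigl(\sum_{i=r+1}^n x_i^2\bigr)(\mathbf t) = \sum_{i=r+1}^n u_i^2 = \tfrac14\sum_{i=r+1}^n(t_i-t_i^{-1})^2$, so $(\theta^*)^{-1}$ of the left-hand function is $4\sum_{i=r+1}^n x_i^2 = 4\,p_2(x_{r+1},\dots,x_n)$, the second power sum. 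Therefore $\xi^P$ of the generator equals $4\,\beta^P(x_{r+1}^2+\dots+x_n^2)$, and everything reduces to expressing the power sum $p_2(x_{r+1},\dots,x_n)\in S(\mathfrak{t}^*)^{W_{L^C_r}}$ in terms of Schubert classes on $\IG(r,2n)$ under $\beta^P$.

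The main obstacle is precisely this last computation: evaluating $\beta^P(x_{r+1}^2+\dots+x_n^2)$ in the Schubert basis. The approach is to use the known presentation of $H^*(\IG(r,2n),\C)$ and the images $\beta(x_i)$ in terms of the special Schubert classes. I would relate $x_{r+1},\dots,x_n$ to the Chern roots of the tautological quotient-type bundle on $\IG(r,2n)$: on $G/B$ one has $\beta(x_i) = \epsilon^B_{s_i} - \epsilon^B_{s_{i-1}}$-type relations, but more efficiently I would use that $\sum_{i=r+1}^n x_i^2 = p_2(x) - p_2(x_1,\dots,x_r)$ where $p_2(x_1,\dots,x_n)$ is $W$-invariant and maps to a class that is computed on the full symplectic flag variety, while $p_2(x_1,\dots,x_r) = e_1(x_1,\dots,x_r)^2 - 2 e_2(x_1,\dots,x_r)$ maps via the $\GL_r$-factor to the class coming from $\operatorname{Gr}$. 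The combinatorial identity to be established is that, on $\IG(r,2n)$, $\beta^P(x_i)$ for $i > r$ corresponds to the Chern roots of the symplectic bundle whose Chern classes are the special Schubert classes $\epsilon^P_{s_j}$, and Pieri-type relations in $H^*(\IG(r,2n))$ then yield $\beta^P(x_{r+1}^2+\dots+x_n^2) = (\epsilon^P_{s_r})^2 + 2\sum_{j=r+1}^{n-1}(\epsilon^P_{s_j})^2 + (\epsilon^P_{s_n})^2 - 2\sum_{j=r}^{n-1}\epsilon^P_{s_j}\epsilon^P_{s_{j+1}}$; the coefficient $2$ on the interior squares and the precise range of the cross-terms are exactly the subtlety coming from the quadratic relation $2 e_k = (\text{sum of products of } \epsilon$'s$)$ in the symplectic cohomology ring, and I would extract them from the standard presentation (e.g. as in Pragacz–Ratajski or [F]) after pulling everything back along $G/B\to G/P$ so that I can compute symmetric-function-theoretically in the $x_i$ and then push forward.
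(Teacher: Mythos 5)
Your treatment of the first formula and your reduction of the second to evaluating $4\,\beta^P\bigl(x_{r+1}^2+\dots+x_n^2\bigr)$ coincide exactly with the paper's argument (as does your verification of the character identity via $\sum_j(t_j^2-2+t_j^{-2})$ and Remark \ref{rem9}(b)). The gap is in the step you yourself flag as ``the main obstacle'' and then leave as a plan: you propose to realize the $x_i$ as Chern roots of a tautological bundle on $\IG(r,2n)$ and to extract the coefficients from a presentation of $H^*(\IG(r,2n))$ via Pieri-type relations (Pragacz--Ratajski, the quadratic relations, etc.), but you do not carry any of this out, and it misconstrues what the formula asserts. The right-hand side is not an expansion in the Schubert basis $\{\epsilon^P_w\}_{w\in W^P}$ --- note that $s_j\notin W^P$ for $j\neq r$, so the individual factors live upstairs on $G/B$ --- it is simply a polynomial in the degree-two classes $\beta(\omega_j)=\epsilon_{s_j}$, and no Pieri rule or ring presentation enters.

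The missing step is a one-line telescoping identity. For $\Sp_{2n}$ one has $\omega_i=x_1+\dots+x_i$ [Bo, Planche III], hence $x_i=\omega_i-\omega_{i-1}$, and therefore
\begin{equation*}
\sum_{i=r+1}^{n}x_i^2=\sum_{i=r+1}^{n}\bigl(\omega_i-\omega_{i-1}\bigr)^2
=\omega_r^2+2\sum_{j=r+1}^{n-1}\omega_j^2+\omega_n^2-2\sum_{j=r}^{n-1}\omega_j\omega_{j+1}.
\end{equation*}
Applying the ring homomorphism $\beta$ and the identity \eqref{eqnborel} term by term gives precisely the bracketed expression in the proposition; the total class is $W_{L_r^C}$-invariant (being the image of a $W_{L_r^C}$-invariant polynomial), hence descends to $H^*(\IG(r,2n),\C)$. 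Replacing the final paragraph of your proposal with this computation completes the proof along the paper's own lines.
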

\begin{proof} For $1\leq i\leq n$, let $x_i:\frt \to \C$ be the linear map  which takes 
$$\diag(x_1, \dots, x_n, x_{-n}, \dots, -x_1)\in \frt \,\,\text{ to}\,\,
x_i$$
 (cf. the identity \eqref{eq201}). By Corollary \ref{coro3}, the homomorphism   $({\theta}_{|T})^*: \C[\mathfrak{t}] \to \C[T]$,
induced from the Springer morphism $\theta$ takes  
$$ 2\left(x_{1} +\dots + x_r\right) \mapsto (t_{1}-t_{1}^{-1})+ \dots +  (t_r-t_r^{-1}) .$$
Now, the weight $x_{1} +\dots + x_r$ corresponds to the fundamental weight $\omega_r$ (cf. [Bo, Planche III]). Hence, the first part of the proposition follows from the definition of $\xi^P$ and the identity \eqref{eqnborel}.

Similarly, under   $({\theta}_{|T})^*$, 
$$ 4\left(x_{r+1}^2 +\dots + x_n^2\right) \mapsto (t_{r+1}-t_{r+1}^{-1})^2+ \dots +  (t_n-t_n^{-1})^2 .$$
Thus, the second part  follows again from the identity
\eqref{eqnborel} and Remark \ref{rem9} (b).
\end{proof}

\section{Specialization of Theorem \ref{thmmain} to $G=\SO_{2n+1}$  and $P$ any maximal parabolic}

We follow the notation from Section 6 and take $n \geq 2$.

 Let $V'=\mathbb{C}^{2n+1}$ be equipped with the
nondegenerate symmetric form $\langle \,,\,\rangle$ so that its matrix $E_B=\
\bigl(\langle e_i,e_j\rangle\bigr)_{1\leq i,j \leq 2n+1}$ (in the standard basis
 $\{e_1,\dots, e_{2n+1}\}$) is  the one given in Section 6. Note that the associated quadratic form on $V'$ is given by
\[Q(\sum t_ie_i)= t_{n+1}^2+\sum_{i=1}^n\,t_it_{2n+2-i}.\]
For  $1\leq r \leq n$, we let $\OG(r,2n+1)$ to be
the set of $r$-dimensional isotropic
subspaces of $V'$ with respect to the quadratic form $Q$, i.e.,
$$\OG(r,2n+1):=\{M\in \Gr(r,V'): Q(v)=0,\ \forall\,  v\in M\}.$$
We take $B_B:=B\cap \SO_{2n+1}$ as the Borel subgroup of $ \SO_{2n+1}$, where $B$ is the standard Borel subgroup of $\SL_{2n+1}$ consisting of upper triangular matrices of determinant $1$.
Then, $\OG(r,2n+1)$  is the quotient $\SO(2n+1)/P_r^B$ of $\SO(2n+1)$ by
 the standard maximal parabolic subgroup
$P_r^B$  with $\Delta \setminus \{\alpha_r\}$ as the set of simple roots of its Levi component
$L_r^B$. (Again we take $L_r^B$ to be the unique Levi subgroup of $P_r^B$
 containing $T_B$.)
Then,
$$L_r^B\simeq \GL_r\times \SO_{2(n-r)+1}.$$
In this case,  by the identity \eqref{eqn2} and Proposition \ref{prop5} (a), 
 $$\Rep^\mathbb{C}_{\poly}(L_r^B) \simeq \C_{\sym}[(\frac{t_1-t_1^{-1}}{2}),  \dots ,   (\frac{t_r-t_r^{-1}}{2})]\otimes_\C \C_{\sym}[(\frac{t_{r+1}-t_{r+1}^{-1}}{2})^2,  \dots ,   (\frac{t_n-t_n^{-1}}{2})^2].$$
Further, by Lemma \ref{lem8}, the identity  \eqref{eqnew111} and Remark \ref{rem9} (b), $\C_{\sym}[(\frac{t_{r+1}-t_{r+1}^{-1}}{2})^2,  \dots ,   (\frac{t_n-t_n^{-1}}{2})^2]$ is generated (as a $\C$-algebra) by the virtual representations:
$$\{\lambda^d\left([S^2(V'_{2(n-r)+1})] - [\Lambda^2(V'_{2(n-r)+1})] -[\epsilon]\right)\}_{1 \leq d \leq n-r},$$
where $V'_{2(n-r)+1} =\C^{2(n-r)+1}$ is the standard representation of $\SO_{2(n-r)+1}$ and $\lambda$ is the special $\lambda$-ring structure on $\Rep(G)$ as in Example \ref{exam7}.
\begin{proposition} \label{prop9}The map $\xi^P: \Rep^\mathbb{C}_{\poly}(L_r^B)  \to H^*(\OG(r,2n+1), \C)$ of Theorem \ref{thmmain}, where $P=P_r^B$,  takes
$$(t_1-t_1^{-1})+ \dots +  (t_r-t_r^{-1})\mapsto 2\epsilon^P_{s_r}, \,\,\,\text{if}\,\, r<n,$$
$$(t_1-t_1^{-1})+ \dots +  (t_r-t_r^{-1})\mapsto  4\epsilon^P_{s_r}, \,\,\,\text{if}\,\, r=n,$$
and
\begin{align*}(t_{r+1}-t_{r+1}^{-1})^2+ \dots + & (t_n-t_n^{-1})^2=[S^2(V'_{2(n-r)+1})] - [\Lambda^2(V'_{2(n-r)+1})] -(2(n-r)+1)[\epsilon]\\
&\mapsto 4\left ((\epsilon^P_{s_r})^2+2\left(\sum_{j=r+1}^{n-1}\, (\epsilon^P_{s_j})^2\right)+ 4 (\epsilon^P_{s_n})^2-
2 \left( \sum_{j=r}^{n-2}\, (\epsilon^P_{s_j}\epsilon^P_{s_{j+1}})\right)- 4
 (\epsilon^P_{s_{n-1}}\epsilon^P_{s_{n}})\right).
\end{align*}
\end{proposition}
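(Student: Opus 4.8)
\textbf{Proof proposal for Proposition \ref{prop9}.}

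The plan is to mimic the proof of Proposition \ref{prop8}, computing the image under $\xi^P=\beta^P\circ(\theta(P)^*)^{-1}$ of the two algebra generators of $\Rep^{\mathbb C}_{\poly}(L_r^B)$, using that the pieces coming from the $\GL_r$-factor and from the $\SO_{2(n-r)+1}$-factor can be handled separately. As before, let $x_i:\frt_B\to\C$ be the coordinate function sending $\diag(x_1,\dots,x_n,0,-x_n,\dots,-x_1)$ to $x_i$. By Corollary \ref{coro3}(c), the homomorphism $({\theta}_{|T})^*$ sends $2(x_1+\dots+x_r)\mapsto(t_1-t_1^{-1})+\dots+(t_r-t_r^{-1})$ and $4(x_{r+1}^2+\dots+x_n^2)\mapsto(t_{r+1}-t_{r+1}^{-1})^2+\dots+(t_n-t_n^{-1})^2$, exactly as in the symplectic case (the Springer/Cayley formulas differ from type $C$ only in the inert $0$ in the middle, which contributes nothing). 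So the real work is purely on the cohomology side: identifying $\beta(\text{relevant symmetric function of the }x_i)$ as an explicit polynomial in the Schubert classes $\epsilon^P_{s_j}$ of $H^*(\OG(r,2n+1))$.

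First I would treat the first generator. For $r<n$ the weight $x_1+\dots+x_r$ is the fundamental weight $\omega_r$ of type $B_n$ (cf. [Bo, Planche II]), so $\beta(\omega_r)=\epsilon^B_{s_r}$ by \eqref{eqnborel}, and pushing down to $G/P_r^B$ gives $\epsilon^P_{s_r}$; hence the image is $2\epsilon^P_{s_r}$. The case $r=n$ is the source of the factor-of-$4$ discrepancy and needs care: there $\omega_n=\tfrac12(x_1+\dots+x_n)$, so $x_1+\dots+x_n=2\omega_n$, and therefore $\beta(x_1+\dots+x_n)=2\epsilon^B_{s_n}$, giving $\xi^P$-image $4\epsilon^P_{s_n}$; one should double-check the normalization of the short simple root $\alpha_n$ and of $\omega_n$ against the indexing convention [Bo], since this is exactly where type $B$ departs from type $C$. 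Second, for the degree-two generator I would expand $4(x_{r+1}^2+\dots+x_n^2)$ via Newton's identity into elementary symmetric polynomials of the $x_j$ for $r+1\le j\le n$, apply $\beta$ term by term, and then re-express everything in the Schubert basis. The needed dictionary $\beta(e_k(x_{r+1},\dots,x_n))\leftrightarrow$ Schubert classes on the isotropic Grassmannian $\OG(r,2n+1)$ is a standard Chevalley-type/Giambelli computation (e.g. via the presentation of $H^*(\OG(r,2n+1))$ as a quotient of $H^*(G/B)$ by the $W_{L_r^B}$-invariants, together with Chevalley's formula for multiplication by $\epsilon^P_{s_j}$); the special coefficients $4$ in front of $(\epsilon^P_{s_n})^2$ and of $\epsilon^P_{s_{n-1}}\epsilon^P_{s_n}$, versus $1$ and $1$ in the interior terms, come from the ratio of root lengths for $\alpha_n$ in type $B$ and must be tracked through the Chevalley coefficients.

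The main obstacle, I expect, is precisely this last bookkeeping: getting the Chevalley/Pieri coefficients for multiplication by the divisor classes $\epsilon^P_{s_j}$ on $\OG(r,2n+1)$ exactly right near the short root $\alpha_n$, so that Newton's identity for $\sum x_j^2$ reassembles into the stated combination $4\big((\epsilon^P_{s_r})^2+2\sum_{j=r+1}^{n-1}(\epsilon^P_{s_j})^2+4(\epsilon^P_{s_n})^2-2\sum_{j=r}^{n-2}\epsilon^P_{s_j}\epsilon^P_{s_{j+1}}-4\epsilon^P_{s_{n-1}}\epsilon^P_{s_n}\big)$. A clean way to organize this is to first do the computation upstairs in $H^*(G/B_B,\C)$ — where $\beta$ is a concrete surjection from $S(\frt_B^*)$ and the Chevalley formula is fully explicit — verify the identity $\beta\big(4\sum_{j\ge r+1}x_j^2\big)=4\big(\cdots\big)$ with all the $s_j$-classes, $r\le j\le n$, and only then project along $G/B_B\to G/P_r^B$, noting that all classes appearing are already $W_{L_r^B}$-invariant so the pushforward simply reinterprets them as Schubert classes of the isotropic Grassmannian. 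Once the upstairs identity is in hand, combining it with Corollary \ref{coro3}(c), \eqref{eqnborel}, and Remark \ref{rem9}(b) (to recognize $4\sum_{j\ge r+1}x_j^2$ as the character $[S^2(V'_{2(n-r)+1})]-[\Lambda^2(V'_{2(n-r)+1})]-(2(n-r)+1)[\epsilon]$ under $(\theta(P)^*)^{-1}$) finishes the proof.
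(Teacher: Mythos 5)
Your treatment of the linear generator is correct and is exactly the paper's argument: by Corollary \ref{coro3}(c) the Springer map sends $2(x_1+\cdots+x_r)$ to $(t_1-t_1^{-1})+\cdots+(t_r-t_r^{-1})$, and in type $B_n$ one has $x_1+\cdots+x_r=\omega_r$ for $r<n$ while $x_1+\cdots+x_n=2\omega_n$, which together with \eqref{eqnborel} produces the coefficients $2$ and $4$. You correctly isolated the $r=n$ normalization as the point of departure from type $C$.

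For the quadratic generator, however, your proposed route (Newton's identities into elementary symmetric polynomials of the $x_j$, then a Chevalley/Giambelli computation of $\beta(e_k(x_{r+1},\dots,x_n))$ on $\OG(r,2n+1)$) is both a detour and the place where your argument stops short: you defer the entire identity to ``bookkeeping'' of Pieri coefficients near the short root, which is nontrivial machinery the paper never uses, and you do not actually produce the stated coefficients. The paper's proof (``same as Proposition \ref{prop8}'') needs none of this: since $\beta$ is a ring homomorphism, one simply writes each coordinate in the basis of fundamental weights, $x_j=\omega_j-\omega_{j-1}$ for $j\le n-1$ (with $\omega_r$ surviving as the only term indexed below $r+1$) and $x_n=2\omega_n-\omega_{n-1}$, so that $\beta(x_j)=\epsilon^B_{s_j}-\epsilon^B_{s_{j-1}}$ and $\beta(x_n)=2\epsilon^B_{s_n}-\epsilon^B_{s_{n-1}}$; squaring and summing over $r+1\le j\le n$ gives
\begin{equation*}
\sum_{j=r+1}^{n}\beta(x_j)^2=(\epsilon^B_{s_r})^2+2\sum_{j=r+1}^{n-1}(\epsilon^B_{s_j})^2+4(\epsilon^B_{s_n})^2-2\sum_{j=r}^{n-2}\epsilon^B_{s_j}\epsilon^B_{s_{j+1}}-4\epsilon^B_{s_{n-1}}\epsilon^B_{s_n},
\end{equation*}
which, multiplied by the factor $4$ from $4\sum x_j^2\mapsto\sum(t_j-t_j^{-1})^2$, is exactly the claimed formula. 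Note that the coefficient $4$ on $(\epsilon^P_{s_n})^2$ and on $\epsilon^P_{s_{n-1}}\epsilon^P_{s_n}$ comes solely from the coefficient $2$ of $\omega_n$ in $x_n$, not from any Chevalley coefficient. One further caution about your plan to ``project along $G/B_B\to G/P_r^B$'': the individual classes $\epsilon^B_{s_j}$ for $j\ne r$ are not $W_{L_r^B}$-invariant; it is only the full expression $\beta\bigl(\sum_{j>r}x_j^2\bigr)$ that is invariant (because $\sum_{j>r}x_j^2$ is), so the descent applies to the combination as a whole, not term by term.
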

\begin{proof} The proof is the same as that of Proposition \ref{prop8}.
\end{proof}

\section{Specialization of Theorem \ref{thmmain} to $G=\SO_{2n}$  and $P$ any maximal parabolic}

We follow the notation from Section 6 and take $n \geq 4$.

(a) For  $1\leq r \leq n-2$, we let $\OG(r,2n)$ to be
the set of $r$-dimensional isotropic
subspaces of $V=\C^{2n}$ with respect to the quadratic form $Q=\sum_{i=1}^n t_it_{2n+1-i}$, i.e.,
$$\OG(r,2n):=\{M\in \Gr(r,V): Q(v)=0,\ \forall\,  v\in M\}.$$
We take $B_D:=B\cap \SO_{2n}$ as the Borel subgroup of $ \SO_{2n}$, where $B$ is the standard Borel subgroup of $\SL_{2n}$ consisting of upper triangular matrices of determinant $1$.
 Then, $\OG(r,2n)$ is the quotient $\SO(2n)/P_r^D$ of $\SO(2n)$ by
 the standard maximal parabolic subgroup
$P_r^D$  with $\Delta \setminus \{\alpha_r\}$ as the set of simple roots of its Levi component
$L_r^D$. (Again we take $L_r^D$ to be the unique Levi subgroup of $P_r^D$
 containing $T_D$.)
Then,
$$L_r^D\simeq \GL_r\times \SO_{2(n-r)}.$$
In this case,  by the identity \eqref{eqn2} and Proposition \ref{prop5} (b), 
 \begin{align*}\Rep^\mathbb{C}_{\poly}  (L_r^D)  \simeq \C_{\sym}[(\frac{t_1-t_1^{-1}}{2}),  \dots ,   (\frac{t_r-t_r^{-1}}{2})]\otimes_{\C} 
 \left(R
 \oplus \left((\frac{t_{r+1}-t_{r+1}^{-1}}{2})\dots (\frac{t_{n}-t_{n}^{-1}}{2})\right) R \right) ,
\end{align*}
where $R:=\C_{\sym}[(\frac{t_{r+1}-t_{r+1}^{-1}}{2})^2,  \dots ,   (\frac{t_n-t_n^{-1}}{2})^2]$.
Further,  as for the case of $G=\Sp_{2n}$ in Section 6, $R$ is generated (as a $\C$-algebra) by the virtual representations:
$$\{\lambda^d\left([S^2(V_{2(n-r)})] - [\Lambda^2(V_{2(n-r)})] \right)\}_{1 \leq d \leq n-r},$$
where $V_{2(n-r)}=\C^{2(n-r)}$ is the standard representation of $\SO_{2(n-r)}$. Moreover, by Lemma \ref{lemmaso}, 
$ \left((t_{r+1}-t_{r+1}^{-1})\dots (t_{n}-t_{n}^{-1})\right)$ corresponds to the virtual representation $[V(2\omega_{n-r})]-
[V(2\omega_{n-r-1})]$ of $\SO_{2(n-r)}$.
\begin{proposition} \label{prop10} For $1\leq r \leq n-2$, the map $\xi^P: \Rep^\mathbb{C}_{\poly}(L_r^D)  \to H^*(\OG(r,2n), \C)$ of Theorem \ref{thmmain}, where 
$P=P_r^D$,  takes
$$(t_1-t_1^{-1})+ \dots +  (t_r-t_r^{-1})\mapsto 2\epsilon^P_{s_r},$$
\begin{align*}(t_{r+1}-t_{r+1}^{-1})^2+ \dots + & (t_n-t_n^{-1})^2=[S^2(V_{2(n-r)})] - [\Lambda^2(V_{2(n-r)})] -2(n-r)
[\epsilon]\\
&\mapsto 4\left( (\epsilon^P_{s_r})^2+2 \left(\sum_{j=r+1}^{n}\, (\epsilon^P_{s_j})^2\right)-
2 \left( \sum_{j=r}^{n-2}\, (\epsilon^P_{s_j}\epsilon^P_{s_{j+1}})\right) - 2
 \epsilon^P_{s_{n-2}}\epsilon^P_{s_{n}}\right),
\end{align*}
and
$$ \left((t_{r+1}-t_{r+1}^{-1})\dots (t_{n}-t_{n}^{-1})\right)\mapsto 2^{n-r} (\epsilon^P_{s_{r+1}}-\epsilon^P_{s_{r}})\dots 
 (\epsilon^P_{s_{n-2}}-\epsilon^P_{s_{n-3}}) (\epsilon^P_{s_{n}}+\epsilon^P_{s_{n-1}}-\epsilon^P_{s_{n-2}}) (\epsilon^P_{s_{n}}-\epsilon^P_{s_{n-1}}).$$
\end{proposition}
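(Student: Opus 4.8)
The plan is to proceed exactly as in the proof of Proposition~\ref{prop8}, reducing everything to the Borel homomorphism $\beta$ and to explicit identifications of $W$-invariant polynomials on $\frt_D$ with Schubert classes on $G/P_r^D$. First I would set up coordinates: let $x_1,\dots,x_n : \frt_D \to \C$ be the linear forms with $x_i\bigl(\diag(x_1,\dots,x_n,-x_n,\dots,-x_1)\bigr)=x_i$, so that by Corollary~\ref{coro3} the map $({\theta}_{|T})^* : \C[\frt_D]\to\C[T_D]$ sends $2(x_1+\dots+x_r)\mapsto (t_1-t_1^{-1})+\dots+(t_r-t_r^{-1})$, sends $4(x_{r+1}^2+\dots+x_n^2)\mapsto (t_{r+1}-t_{r+1}^{-1})^2+\dots+(t_n-t_n^{-1})^2$, and sends $2^{n-r}\,x_{r+1}\cdots x_n \mapsto (t_{r+1}-t_{r+1}^{-1})\cdots(t_n-t_n^{-1})$. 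Since $\xi^P = \beta^P\circ(({\theta_{\omega_1}(P)})^*)^{-1}$, each of the three assertions amounts to computing $\beta^P$ applied to $x_1+\dots+x_r$, to $x_{r+1}^2+\dots+x_n^2$, and to $x_{r+1}\cdots x_n$ respectively (up to the stated scalars $2$, $4$, $2^{n-r}$), and identifying the answer in the Schubert basis of $H^*(\OG(r,2n),\C)$.

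For the first formula, the linear form $x_1+\dots+x_r$ is the fundamental weight $\omega_r$ of type $D_n$ for $r\le n-2$ (cf. [Bo, Planche~IV]), so $\beta(\omega_r)=\epsilon^B_{s_r}$ by \eqref{eqnborel}, and under $H^*(G/P_r^D)\hookrightarrow H^*(G/B)$ this is $\epsilon^P_{s_r}$; combined with the factor $2$ this gives the claim. For the second formula I would expand $x_{r+1}^2+\dots+x_n^2$ in terms of the $\omega_j$'s of type $D_n$ (using $\omega_j = x_1+\dots+x_j$ for $j\le n-2$, $\omega_{n-1}=\tfrac12(x_1+\dots+x_{n-1}-x_n)$, $\omega_n=\tfrac12(x_1+\dots+x_n)$), or equivalently write it as a quadratic expression in the classes $\epsilon^B_{s_j}=\beta(\omega_j)$, then push forward to $G/P_r^D$ and rewrite using the Chevalley/Pieri-type relations in $H^*(\OG(r,2n))$ to match the asserted combination $4\bigl((\epsilon^P_{s_r})^2+2\sum_{j=r+1}^n(\epsilon^P_{s_j})^2-2\sum_{j=r}^{n-2}\epsilon^P_{s_j}\epsilon^P_{s_{j+1}}-2\epsilon^P_{s_{n-2}}\epsilon^P_{s_n}\bigr)$; the interpretation of this polynomial as the virtual character $[S^2 V_{2(n-r)}]-[\Lambda^2 V_{2(n-r)}]-2(n-r)[\epsilon]$ comes from Remark~\ref{rem9}(b) applied to the $\SO_{2(n-r)}$-factor of $L_r^D$. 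For the third formula, by Lemma~\ref{lemmaso} the product $(t_{r+1}-t_{r+1}^{-1})\cdots(t_n-t_n^{-1})$ corresponds to $[V(2\omega_{n-r})]-[V(2\omega_{n-r-1})]$ for the $\SO_{2(n-r)}$-factor; on the cohomology side $\beta(x_i)$ for $r+1\le i\le n$ must be expressed via the $\beta(\omega_j)=\epsilon^B_{s_j}$, giving $\beta(x_i)=\epsilon^B_{s_i}-\epsilon^B_{s_{i-1}}$ for $r+1\le i\le n-2$, $\beta(x_{n-1})=\epsilon^B_{s_{n-1}}+\epsilon^B_{s_n}-\epsilon^B_{s_{n-2}}$, $\beta(x_n)=\epsilon^B_{s_n}-\epsilon^B_{s_{n-1}}$, and multiplying these (after pushing down to $G/P_r^D$) yields the stated product of four types of factors, with the overall scalar $2^{n-r}$.

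The main obstacle is the second formula: unlike the symplectic case, the type $D_n$ fundamental weights $\omega_{n-1},\omega_n$ involve a factor of $\tfrac12$, so writing $x_{r+1}^2+\dots+x_n^2$ as an honest polynomial in the $\omega_j$ (equivalently, controlling the quadratic relations among the $\epsilon^P_{s_j}$ in $H^*(\OG(r,2n))$) requires care with the "spinor" nodes $\alpha_{n-1},\alpha_n$, and this is exactly where the asymmetric terms $4(\epsilon^P_{s_n})^2$ and $-2\epsilon^P_{s_{n-2}}\epsilon^P_{s_n}$ (as opposed to a uniform pattern) come from. Concretely one has $x_n^2 = (\omega_n-\omega_{n-1})^2$ and $x_{n-1}^2=(\omega_{n-1}+\omega_n-\omega_{n-2})^2$ after substituting $x_{n-1}=\omega_{n-1}+\omega_n-\omega_{n-2}$ and $x_n=\omega_n-\omega_{n-1}$, and $x_i^2=(\omega_i-\omega_{i-1})^2$ for $r+1\le i\le n-2$; expanding the telescoping sum $\sum_{i=r+1}^n x_i^2$ and collecting coefficients of the monomials $\epsilon^P_{s_j}\epsilon^P_{s_k}$ produces the claimed expression. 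All remaining steps — the linear case, the product case, and the identification of the scalars — are the same routine bookkeeping as in Propositions~\ref{prop8} and \ref{prop9}, together with the observation that the top class $\epsilon^P_{s_n}$ (and $\epsilon^P_{s_{n-1}}$) on $\OG(r,2n)$ corresponds under $\beta^P$ to a spinor-type weight, which accounts for the factor-of-$2$ discrepancies relative to the $B_n$ and $C_n$ cases.
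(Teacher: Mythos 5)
Your proposal is correct and follows exactly the route the paper intends: the paper omits the proof of Proposition~\ref{prop10}, saying only that it is similar to that of Proposition~\ref{prop8}, and your argument is precisely that adaptation, with the type $D_n$ weight data $\omega_{n-1}=\tfrac12(x_1+\dots+x_{n-1}-x_n)$, $\omega_n=\tfrac12(x_1+\dots+x_n)$ correctly yielding $\beta(x_{n-1})=\epsilon^B_{s_{n-1}}+\epsilon^B_{s_n}-\epsilon^B_{s_{n-2}}$, $\beta(x_n)=\epsilon^B_{s_n}-\epsilon^B_{s_{n-1}}$, and the expansion of $\sum_{i=r+1}^n\beta(x_i)^2$ reproducing the asserted quadratic expression. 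Your telescoping computation in fact supplies the details the paper leaves out.
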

\begin{proof} The proof is  similar to that of   Proposition \ref{prop8} and hence omitted.
\end{proof}

(b) For $r=n$,  let $P^D_n$ be the maximal standard parabolic subgroup with $\Delta \setminus \{\alpha_n\}$ as the set of simple roots of its Levi component
$L_n^D$. Then, the partial flag variety $\SO_{2n}/P^D_n$ can be identified with  a connected component 
$\OG(n,2n)_+$ of the set of $n$-dimensional isotropic
subspaces of $V$. Moreover, the Levi subgroup 
$$L_n^D \simeq \GL_n.$$ In this case, by the identity \eqref{eqn2}, 
$$\Rep^\mathbb{C}_{\poly}  (L_n^D) \simeq \C_{\sym}[(\frac{t_1-t_1^{-1}}{2}),  \dots ,   (\frac{t_n-t_n^{-1}}{2})].$$
 \begin{proposition} The map $\xi^{P_n^D}: \Rep^\mathbb{C}_{\poly}(L_n^D)  \to H^*(\OG(n,2n)_+, \C)$ of Theorem \ref{thmmain} takes
$$(t_1-t_1^{-1})+ \dots +  (t_n-t_n^{-1})\mapsto 4\epsilon^{P^D_n}_{s_n}.$$
\end{proposition}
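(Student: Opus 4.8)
The plan is to unwind the definition $\xi^{P^D_n}=\beta^{P^D_n}\circ\bigl((\theta(P^D_n))^*\bigr)^{-1}$ and reduce the claim to the Borel homomorphism evaluated on a single fundamental weight, exactly as in the proof of Proposition \ref{prop8} (and Proposition \ref{prop10}); the only new feature is keeping track of the numerical coefficient. By Theorem \ref{thmmain} the assertion amounts to computing the image of $\sum_{i=1}^n(t_i-t_i^{-1})\in\Rep^\C_{\poly}(L^D_n)$ under this composite.

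First I would compute the preimage under $(\theta(P^D_n))^*$. For $1\le i\le n$ let $x_i\colon\frt_D\to\C$ be the coordinate function sending $\diag(x_1,\dots,x_n,-x_n,\dots,-x_1)$ to $x_i$ (cf. \eqref{eq201}). By Corollary \ref{coro3}(a) the restricted Springer morphism sends ${\bf t}\in T_D$ to $\diag\bigl(\tfrac{t_1-t_1^{-1}}{2},\dots,\tfrac{t_n-t_n^{-1}}{2},\dots\bigr)$, so the induced map on affine coordinate rings sends $x_i\mapsto\tfrac{t_i-t_i^{-1}}{2}$, hence
$$(\theta_{|T_D})^*\Bigl(2\textstyle\sum_{i=1}^n x_i\Bigr)=\sum_{i=1}^n(t_i-t_i^{-1}).$$
Since $2\sum_i x_i$ is symmetric in the $x_i$, it lies in $S(\frt_D^*)^{W_{L^D_n}}$, so $\bigl((\theta(P^D_n))^*\bigr)^{-1}$ carries $\sum_i(t_i-t_i^{-1})$ to $2\sum_i x_i$.

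Next I would rewrite $\sum_i x_i$ in terms of fundamental weights and apply $\beta^{P^D_n}$. Following the Bourbaki indexing for type $D_n$ ([Bo, Planche IV]), the $n$-th fundamental weight is $\omega_n=\tfrac12(x_1+\cdots+x_n)$, so $2\sum_i x_i=4\omega_n$. By \eqref{eqnborel}, $\beta(\omega_n)=\epsilon^B_{s_n}$, hence $\beta^{P^D_n}(4\omega_n)=4\epsilon^B_{s_n}$; and since $s_n\in W^{P^D_n}$, under the identification $H^*(G/B,\C)^{W_{L^D_n}}\simeq H^*(G/P^D_n,\C)$ the class $\epsilon^B_{s_n}$ corresponds to $\epsilon^{P^D_n}_{s_n}$. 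Composing the two steps yields $\xi^{P^D_n}\bigl(\sum_i(t_i-t_i^{-1})\bigr)=4\epsilon^{P^D_n}_{s_n}$.

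I do not expect a genuine obstacle here: given Corollary \ref{coro3} and Theorem \ref{thmmain}, the statement is a short computation that runs parallel to the $r=n$ case of Proposition \ref{prop8} and to Proposition \ref{prop10}. The one point requiring care — and the source of the coefficient $4$ rather than $2$ — is that the factor $2$ in the denominator of the Cayley/Springer transform compounds with the factor $2$ relating $x_1+\cdots+x_n$ to $\omega_n$ in type $D_n$ (contrast type $C_n$, where $\omega_n=x_1+\cdots+x_n$ and one only gets $2\epsilon^P_{s_n}$). Accordingly I would double-check the $\varepsilon$-versus-$\omega$ normalization against [Bo, Planche IV] and confirm that $s_n$ indeed represents the relevant codimension-one Schubert class on $\OG(n,2n)_+\simeq\SO_{2n}/P^D_n$.
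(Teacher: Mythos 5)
Your proposal is correct and follows exactly the route the paper intends: the paper's proof of this proposition simply says it is the same as that of Proposition \ref{prop8}, namely pull $\sum_i(t_i-t_i^{-1})$ back through the Cayley/Springer transform to $2(x_1+\cdots+x_n)$, express this in fundamental weights, and apply \eqref{eqnborel}. Your identification $x_1+\cdots+x_n=2\omega_n$ in type $D_n$ (versus $\omega_n=x_1+\cdots+x_n$ in type $C_n$) is precisely the point that produces the coefficient $4$ instead of $2$, and you have handled it correctly.
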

\begin{proof} The proof is the same as that of Proposition \ref{prop8}.
\end{proof}
Of course, the case of $r=n-1$ is parallel to the above case of $r=n$ due to the diagram automorphism taking the $n$-th node of $D_n$ to the $(n-1)$-th node.

\section{Specialization of Theorem \ref{thmmain} to the classical groups and $P=B$}

In this section $G$ is any of $\Sp_{2n}, \SO_{2n+1}$ or $\SO_{2n}$. We get the following lemma immediately from Corollary \ref{coro3}.
\begin{lemma} Under the coordinates \eqref{eqnnew201} and \eqref{eqnnew202} on the maximal torus $T$ of  $\Sp_{2n}, \SO_{2n+1}$ or $\SO_{2n}$,
$$\Rep_{\poly}^\C(T)= \C [ (\frac{t_1-t_1^{-1} } {2}), \dots, (\frac{t_n-t_n^{-1} } {2})], $$
where $T$ is thought of as the Levi subgroup of $B$.
\end{lemma}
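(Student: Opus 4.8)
The plan is to unwind Definition~\ref{maindefi} in the special case $P=B$, where the Levi subgroup is $L=T$ itself and hence $W_L = W_T$ is the trivial group. First I would recall that, by Definition~\ref{maindefi} (and the abbreviation convention of Section 6), $\Rep_{\poly}^\C(T) = \Rep^\C_{\omega_1-\poly}(T)$ is \emph{by definition} the image of the injective $\C$-algebra homomorphism $({\theta}_{|T})^*\colon \C[\frt] = S(\frt^*) \to \C[T]$ coming from the Springer morphism restricted to $T$; since $W_T = \{1\}$ there is no $W_L$-invariance to impose, so the whole content is to identify this image as a subalgebra of $\C[T]$.

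Next I would observe that $\C[\frt]$ is a polynomial algebra $\C[x_1,\dots,x_n]$, where $x_i\colon\frt\to\C$ is the coordinate function picking out the $i$-th entry in the parametrizations \eqref{eq201} and \eqref{eq202} of $\frt_C=\frt_D$ and of $\frt_B$ respectively; in all three types $\dim\frt = n$. Because $({\theta}_{|T})^*$ is a homomorphism of $\C$-algebras, its image is exactly the $\C$-subalgebra of $\C[T]$ generated by $({\theta}_{|T})^*(x_1),\dots,({\theta}_{|T})^*(x_n)$, so it suffices to compute these $n$ generators.

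Finally I would read these generators off Corollary~\ref{coro3}: in each of the three families the $i$-th diagonal entry of $\theta({\bf t})$ is $\tfrac{t_i-t_i^{-1}}{2}$ for $1\leq i\leq n$ (parts (a), (b), (c) give the same formula in these slots), so $({\theta}_{|T})^*(x_i) = x_i\circ\theta_{|T} = \tfrac{t_i-t_i^{-1}}{2}$. Hence the image equals $\C[(\tfrac{t_1-t_1^{-1}}{2}),\dots,(\tfrac{t_n-t_n^{-1}}{2})]$, which is the assertion. There is essentially no obstacle here, since Corollary~\ref{coro3} already does the analytic work; the only points requiring a little care are the bookkeeping of which coordinate of $\frt$ corresponds to which torus coordinate $t_i$, and the remark that for $P=B$ the group $W_L$ is trivial so that no symmetrization of the generators intervenes (in contrast to Proposition~\ref{prop5} and the maximal-parabolic cases in Sections 7--9).
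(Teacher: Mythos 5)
Your proof is correct and matches the paper's intent exactly: the paper simply states that the lemma follows immediately from Corollary~\ref{coro3}, and your argument (for $P=B$ the group $W_L$ is trivial, so $\Rep_{\poly}^\C(T)$ is the image of the algebra map $({\theta}_{|T})^*$, generated by the images $\tfrac{t_i-t_i^{-1}}{2}$ of the coordinate functions $x_i$) is precisely the omitted verification.
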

\begin{proposition}\label{lastpropo} Under the homomorphism $\xi^B:\Rep_{\poly}^\C(T) \to H^*(G/B, \C)$ of Theorem \ref{thmmain}, 

(a) $G=\Sp_{2n}:$ 
$$ t_i-t_i^{-1} \mapsto 2 (\epsilon^B_{s_i}- \epsilon^B_{s_{i-1}}), \,\,\,\text{for any}\,\, 1\leq i\leq n .$$

(b)  $G=\SO_{2n+1}:$ 
$$ t_i-t_i^{-1} \mapsto 2 (\epsilon^B_{s_i}- \epsilon^B_{s_{i-1}}), \,\,\,\text{for any}\,\, 1\leq i < n ,$$
and 
$$ t_n-t_n^{-1} \mapsto 2 (2\epsilon^B_{s_n}- \epsilon^B_{s_{n-1}}).$$

(c)  $G=\SO_{2n}:$ 
$$ t_i-t_i^{-1} \mapsto 2 (\epsilon^B_{s_i}- \epsilon^B_{s_{i-1}}), \,\,\,\text{for any}\,\, 1\leq i \leq n-2 ,$$
$$ t_{n-1}-t_{n-1}^{-1} \mapsto 2 (\epsilon^B_{s_{n-1}} + \epsilon^B_{s_{n}}- \epsilon^B_{s_{n-2}}),$$
and
$$ t_{n}-t_{n}^{-1} \mapsto 2 (\epsilon^B_{s_{n}} - \epsilon^B_{s_{n-1}}).$$
\end{proposition}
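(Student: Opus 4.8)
The plan is to compute the image of each $t_i - t_i^{-1}$ under $\xi^B = \xi^B_{\omega_1}$ directly from the definition. By Corollary \ref{coro3}, for each of the three groups the Springer morphism $\theta$ sends the coordinate $x_i$ on $\frt$ (the linear form picking out the $i$-th diagonal entry) to $\frac{t_i - t_i^{-1}}{2}$; hence the dual map $({\theta}_{|T})^*$ sends $2x_i \mapsto t_i - t_i^{-1}$, and by definition $\xi^B(t_i - t_i^{-1}) = \beta(2x_i) = 2\beta(x_i)$. So the whole computation reduces to expressing the linear forms $x_i \in \frt^*$ in terms of the fundamental weights $\omega_j$ and then applying the identity \eqref{eqnborel}, namely $\beta(\omega_j) = \epsilon^B_{s_j}$.

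The key step is therefore the change of basis from the $x_i$ to the $\omega_j$, which I would read off from [Bo, Planche II, III, IV] following the paper's stated indexing convention. For $\Sp_{2n}$ (type $C_n$), one has $\omega_i = x_1 + \dots + x_i$ for all $i$, so $x_i = \omega_i - \omega_{i-1}$ (with $\omega_0 := 0$), giving $\beta(x_i) = \epsilon^B_{s_i} - \epsilon^B_{s_{i-1}}$ and the stated formula in (a). For $\SO_{2n+1}$ (type $B_n$), $\omega_i = x_1 + \dots + x_i$ for $i < n$ but $\omega_n = \tfrac12(x_1 + \dots + x_n)$, so $x_i = \omega_i - \omega_{i-1}$ for $i < n$ while $x_n = 2\omega_n - \omega_{n-1}$, yielding the two cases in (b). For $\SO_{2n}$ (type $D_n$), $\omega_i = x_1 + \dots + x_i$ for $i \leq n-2$, while $\omega_{n-1} = \tfrac12(x_1 + \dots + x_{n-1} - x_n)$ and $\omega_n = \tfrac12(x_1 + \dots + x_{n-1} + x_n)$; inverting gives $x_i = \omega_i - \omega_{i-1}$ for $i \leq n-2$, then $x_{n-1} = \omega_{n-1} + \omega_n - \omega_{n-2}$ and $x_n = \omega_n - \omega_{n-1}$, which produces (c). In each case one must also check that these linear forms really are $W_L$-invariant with $L = T$, i.e. that they lie in the domain $S(\frt^*)^{W_T} = S(\frt^*)$ of $\beta^B$ — but $W_T$ is trivial so this is automatic, and one must confirm that $t_i - t_i^{-1}$ actually lies in $\Rep^\C_{\poly}(T)$, which is exactly the content of the preceding Lemma.

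Assembling these: apply $\xi^B = \beta \circ (({\theta}_{|T})^*)^{-1}$ to $t_i - t_i^{-1}$, use $({\theta}_{|T})^*(2x_i) = t_i - t_i^{-1}$ from Corollary \ref{coro3}, substitute the fundamental-weight expansions above, and apply \eqref{eqnborel} term by term; linearity of $\beta$ finishes each identity. The main obstacle — really the only place to be careful — is getting the Bourbaki normalizations of the fundamental weights exactly right (in particular the factors of $2$ attached to the spin-type weights $\omega_n$ for $B_n$ and $\omega_{n-1}, \omega_n$ for $D_n$), since a sign or factor error there propagates directly into the final formulas. Everything else is linear bookkeeping, so as in the earlier propositions I would state that the proof is the same as that of Proposition \ref{prop8} and record only the change-of-basis data.
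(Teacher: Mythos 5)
Your proposal is correct and is exactly the argument the paper intends: the paper omits the proof, declaring it ``similar to the proof of Proposition \ref{prop8},'' and your computation --- pulling back $t_i-t_i^{-1}$ to $2x_i$ via Corollary \ref{coro3}, expanding $x_i$ in the fundamental weights from the Bourbaki planches, and applying \eqref{eqnborel} --- is precisely that argument carried out in detail, with all three change-of-basis computations (including the spin-weight factors for $B_n$ and $D_n$) done correctly.
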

\begin{proof} The proof is similar to the proof of Proposition \ref{prop8} and hence omitted.
\end{proof}
\bibliographystyle{plain}
\def\noopsort#1{}

\vskip5ex

\noindent
Address: Shrawan Kumar,
Department of Mathematics,
University of North Carolina,
Chapel Hill, NC  27599--3250. 
\noindent
email: shrawan@email.unc.edu

\end{document}